\theoremstyle{plain}
\newtheorem{theorem}{Theorem}[section]
\newtheorem{prop}[theorem]{Proposition}
\newtheorem{lemma}[theorem]{Lemma}
\newtheorem{coro}[theorem]{Corollary}
\theoremstyle{definition}
\newtheorem{remark}[theorem]{Remark}
\newtheorem{example}[theorem]{Example}
\numberwithin{equation}{section}
\newcommand{\dd}{\,\mathrm{d}}
\newcommand{\ts}{\hspace{0.5pt}}
\newcommand{\nts}{\hspace{-0.5pt}}
\newcommand{\cM}{\mathcal{M}}
\newcommand{\ZZ}{\mathbb{Z}}
\newcommand{\RR}{\mathbb{R}\ts}
\newcommand{\EE}{\mathbb{E}}
\newcommand{\TT}{\mathbb{T}}
\newcommand{\NN}{\mathbb{N}}
\newcommand{\XX}{\mathbb{X}}
\newcommand{\exend}{\hfill $\Diamond$}
\newcommand{\defeq}{\mathrel{\mathop:}=}
\newcommand{\myfrac}[2]{\frac{\raisebox{-2pt}{$#1$}}
      {\raisebox{0.5pt}{$#2$}}}
\newcommand{\bs}[1]{\boldsymbol{#1}}
\begin{document}

\title{Correlations of the Thue--Morse sequence}

\author{Michael Baake}
\author{Michael Coons}
\address{Fakult\"{a}t f\"{u}r Mathematik, Universit\"{a}t Bielefeld, \newline
\hspace*{\parindent}Postfach 100131, 33501 Bielefeld, Germany}
\email{mbaake@math.uni-bielefeld.de}

\address{Department of Mathematics and Statistics, California State
  University\newline \hspace*{\parindent}400 West First
  Street, Chico, California 95929, USA} \email{mjcoons@csuchico.edu}

\makeatletter
\@namedef{subjclassname@2020}{%
  \textup{2020} Mathematics Subject Classification}
\makeatother

\keywords{Thue--Morse sequence, correlations, regular sequences}

\subjclass[2020]{37B10, 52C23}


\begin{abstract}
  The pair correlations of the Thue--Morse sequence and system are
  revisited, with focus on asymptotic results on various means.
  First, it is shown that all higher-order correlations of the
  Thue--Morse sequence with general real weights are effectively
  determined by a single value of the balanced $2$-point
  correlation. As a consequence, we show that all odd-order
  correlations of the balanced Thue--Morse sequence vanish, and that,
  for any even $n$, the $n$-point correlations of the balanced
  Thue--Morse sequence have mean value zero, as do their absolute
  values, raised to an arbitrary positive power. All these results
  also apply to the entire Thue--Morse system. We finish by showing
  how the correlations of the Thue--Morse system with general real
  weights can be derived from the balanced $2$-point correlations.
\end{abstract}

\maketitle

\centerline{Dedicated to the memory of Uwe Grimm}

\bigskip

\section{Introduction}

The study of (possibly hidden) long-range order of sequences over
finite alphabets, in particular binary ones, has a long and
interesting history; see \cite{ASbook, BGbook} and the references
therein for background. For about 100 years, starting with the insight
of Norbert Wiener, methods from harmonic analysis have been
instrumental to detect all kinds of long-range correlations via
spectral methods. While sequences with strong almost periodicity (and
hence pure point spectrum) were the first to be analysed and
understood, the ones with continuous spectra remained somewhat
enigmatic. In particular, the classic Thue--Morse (or
Prouhet--Thue--Morse) sequence with the singular continuous measure
induced by it became a paradigm of a degree of order intermediate
between pure point and absolutely continuous. First analysed by
Mahler~\cite{M1927} in 1927 by direct means, it later saw a systematic
reformulation by Kakutani \cite{Kaku} with dynamical systems methods,
and has recently been analysed in a fractal geometric setting via the
thermodynamic formalism \cite{BGKS} and in the context of
hyperuniformity \cite{BG2019}. Many obvious generalisations are known
\cite[Sec.~5.2]{BG2019}, and the progress in this direction has also
triggered new research on absolutely continuous spectra
\cite{CG2017,CGS2018,FMpre} as well as general spectral considerations
on the basis of renormalisation techniques
\cite{Luck,BS2014,BGM2018,BGM2019,BS2021}.

While much of the current literature concerns either the
autocorrelation of the Thue--Morse sequence (respectively system) or
the maximal spectral measure in the orthocomplement of the point
spectrum, compare \cite{BGbook} and \cite{Qbook}, much less is known
about the general correlation functions. It is the purpose of this
paper to fill this gap by deriving further asymptotic properties and a
general approach to the (higher-order) correlation functions and
determining some of their properties. As an added benefit, this
{provides extra insight} into the invariant probability measure on the
shift space that is induced by the Thue--Morse sequence. Let us also
mention that the (balanced) Thue--Morse sequence is Gowers uniform for
any of the standard uniformity norms \cite{Kon}, which is to say that
certain averages of $n$-point correlation functions, for $n=2^s$ with
$s\in\NN$, decay asymptotically with a power-law upper bound. It is
thus a natural question to also consider other averages of correlation
functions and their asymptotic averages, as we shall do below.
\smallskip

This paper is organised as follows. In Section~\ref{sec:prelim}, we
set the scene and give a brief summary of the Thue--Morse sequence and
{the dynamical system generated by it}, together with some classical
results on the two-point correlations (or autocorrelations). Here, we
add some results on their asymptotic properties. We continue in
Sections~\ref{sec:balanced} and \ref{sec:balanced-gen} with the
$n$-point correlations of the balanced Thue--Morse {system}, which,
via a general recurrence, are shown to be determined by the values on
an $(n\ts {-}1)$-dimensional unit cube, and further that these values
are determined by the value of the autocorrelation at zero, again, via
the recurrence. This result is then used to show that all odd-order
correlations vanish, and that, under a natural ordering, all
even-order correlations have mean value zero, in various ways. In
Section~\ref{sec:general}, we more generally show that all weighted
correlations are fully determined once the balanced ones are known,
and a general renormalisation structure is employed to achieve this.

\section{Preliminaries}\label{sec:prelim}

Let $(t_k)^{}_{k\geqslant 0}$ be the (one-sided) Thue--Morse sequence,
or word, taking the values $\pm 1$, defined by $t^{}_0=1$ and, for
$k \geqslant 0$, by
\begin{equation}\label{eq:t-rec}
   t_{2k} \, = \, t_{k} \quad \text{and} \quad
   t_{2k+1} \, = \, -t_k \ts . 
\end{equation}   
This sequence is the fixed point, starting from the seed $a$, of the
substitution
\[
  \varrho \, = \,
  \varrho_{_\mathrm{TM}} : \begin{cases} a\mapsto ab \ts , \\
    b\mapsto ba \ts ,\end{cases}
\]
where we specialise the values by $a=-b=1$. 

\begin{lemma} 
   For\/ $m\in\NN_0$, we have\/ 
   $(t_0,t_1,\ldots,t_{2^m-1})= (1, -1)^{\otimes m}$.
\end{lemma}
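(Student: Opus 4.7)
The plan is to prove the identity by induction on $m$, reading the recursion \eqref{eq:t-rec} as a relation between the blocks $v_m \defeq (t_0, t_1, \ldots, t_{2^m-1})$. First, I would fix the convention that, for row vectors $u$ of length $p$ and $v$ of length $q$, the tensor (Kronecker) product $u \otimes v$ is the row vector of length $pq$ whose $(iq+j)$-entry is $u_i \ts v_j$, for $0 \leqslant i < p$ and $0 \leqslant j < q$; this flattens $(1,-1)^{\otimes m}$ into an element of $\{\pm 1\}^{2^m}$ and makes the lengths on both sides match. The base case $m=0$ is then trivial, since the empty tensor product equals the scalar $1=t_0$.

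For the induction step, the key observation is that $v_{m+1} = v_m \otimes (1,-1)$. Indeed, for every $0 \leqslant k < 2^m$, the recursion \eqref{eq:t-rec} gives
\[
   (v_{m+1})_{2k} \, = \, t_{2k} \, = \, t_k \, = \, (v_m)_k \cdot 1
   \quad \text{and} \quad
   (v_{m+1})_{2k+1} \, = \, t_{2k+1} \, = \, -t_k \, = \, (v_m)_k \cdot (-1),
\]
which are precisely the components of $v_m \otimes (1,-1)$ in the chosen flattening. Combining this identity with the induction hypothesis $v_m = (1,-1)^{\otimes m}$ and the associativity of the (flat) tensor product then yields $v_{m+1} = (1,-1)^{\otimes m} \otimes (1,-1) = (1,-1)^{\otimes (m+1)}$, which closes the induction.

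The only genuine issue to handle is a bookkeeping one: once the flattening of iterated tensor products into finite sequences is fixed in a way that is compatible with the underlying substitution $a\mapsto ab$, $b\mapsto ba$ (so that, within each block, even-indexed entries come before odd-indexed ones), the statement reduces essentially to a reformulation of \eqref{eq:t-rec}. Hence there is no real obstacle beyond stating the convention unambiguously.
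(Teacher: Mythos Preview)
Your argument is correct. You prove the identity by induction via the \emph{right} tensor factor, reading \eqref{eq:t-rec} as $v_{m+1}=v_m\otimes(1,-1)$; with your explicit flattening convention this is just a restatement of $t_{2k}=t_k$ and $t_{2k+1}=-t_k$, and the induction closes cleanly. This is precisely the ``obvious'' substitution-based induction that the paper alludes to but deliberately bypasses.

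The paper instead first derives the closed form $t_n=(-1)^{s_2(n)}$ from \eqref{eq:t-rec}, and then uses $s_2(2^m+a)=s_2(a)+1$ for $0\leqslant a<2^m$ to get $t_{2^m+a}=-t_a$; in your language this is the \emph{left} tensor step $v_{m+1}=(1,-1)\otimes v_m$, i.e.\ the concatenation $v_{m+1}=(v_m,-v_m)$. Since one is taking a tensor \emph{power}, both orderings lead to the same vector, so the two inductions are equivalent. What the paper's detour buys is the digit-sum formula itself, which is of independent interest; what your route buys is economy, since it uses nothing beyond the defining recursion and a carefully stated flattening convention.
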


With $t_0 = 1$ and the the structure of the $m$-fold Kronecker
product, one obvious way to prove the lemma is through induction in
$m$ via the action of the substitution. Here, we follow an alternative
path, as it provides additional insight.

\begin{proof} 
  Note that \eqref{eq:t-rec} implies that $t_n=(-1)^{s^{}_2 (n)}$,
  where $s^{}_2 (n)$ is the number of $1$s in the binary expansion of
  $n$. To prove the claim for all $m$, we only need to show that
  $t_a=-t_{2^m+a}$ holds for all $a\in\{0,\ldots,2^{m}\nts -1\}$. This
  follows immediately since, for $a\in\{0,\ldots,2^{m}\nts -1\}$, we
  have $s^{}_2 (2^m+a)=s^{}_2 (a)+1$.
\end{proof}

Now, we properly extend the Thue--Morse sequence to a bi-infinite
sequence (or word) $w\defeq(w_n)_{n\in\ZZ}$ by defining
\[
  w_n \, = \, \begin{cases} t_n \ts ,&
         \mbox{for $n\geqslant0$} \ts ,\\
    t_{-n-1} \ts , &\mbox{for $n<0$}\ts .\end{cases}
\]
Similar to the above, the word $w$ is the bi-infinite fixed point of
the square of the Thue--Morse substitution, $\varrho^2$, starting from
the seed $a|a$, where as before $a=-b=1$; see \cite[Rem.~4.8]{BGbook}.

Let $S$ be the shift operator, $(Sw)_i=w_{i+1}$, and
$\XX=\XX(w)\defeq\overline{\{S^iw:i\in\ZZ\}} \subset \{ \pm \ts 1
\}^{\ZZ}$ be the (discrete) hull. The space $\XX$ together with the
$\ZZ$-action of the shift forms a topological dynamical system,
denoted by $(\XX, \ZZ)$. It admits precisely one invariant probability
measure, say $\mu$. In other words, $(\XX, \ZZ)$ is uniquely
ergodic. Since $\XX$ is also minimal (meaning that the $\ZZ$-orbit of
every element is dense in $\XX$), the system is even \emph{strictly
  ergodic}; see \cite{Kaku,BGbook} for details. Using this measure
$\mu$, one has
\[
  \int_{\XX} x_{m^{}_0} x_{m^{}_1} \cdots
  x_{m^{}_{n-1}}\dd\mu(x) \, = \lim_{N\to\infty}
  \myfrac{1}{N} \sum_{k=0}^{N\nts -1} y^{}_{k+m^{}_0}
  y^{}_{k+m^{}_1} \nts \cdots \ts y^{}_{k+m^{}_{n-1}} \ts ,
\]
where the choice of $y=(y^{}_i)^{}_{i\in\ZZ}\in\XX$ is arbitrary. This
equality is a consequence of Birkhoff's ergodic theorem for uniquely
ergodic shift spaces, because the right-hand side is the Birkhoff
average of a continuous function on $\XX$.

In this paper, we are interested in the \emph{correlations} of the
Thue--Morse {system}, both for the standard balanced weights described
above and for more general real weights. Since the measure $\mu$ is
shift invariant, without loss of generality, we can fix one of the
$m^{}_i$; we thus set $m^{}_0=0$. Further, we let
$f\colon \{-1,1\}\longrightarrow \RR$ and define the \emph{general}
$n$-point correlations of the $f$-weighted Thue--Morse sequence {(and
  system)} by
\begin{equation}\label{eq:eta-f}
  \eta^{}_{f} (m^{}_1 , m^{}_2 , \ldots,
  m^{}_{n-1}) \, \defeq \int_{\XX}f(x^{}_0)\,
  f ( x^{}_{m^{}_{1}} ) \cdots f ( x^{}_{m^{}_{n-1}} ) \dd\mu(x) \ts .
\end{equation}
In the balanced case, that is, when $f =\mathrm{id} $ is the identity
function, we suppress the subscript $f$ and simply write $\eta$
instead of $\eta^{}_{\mathrm{id}}$. For a study towards a different
generalisation, using multiple weight functions, see Aloui
\cite{A2022}.

We note that $\mu$ is the \emph{patch frequency measure} of the
system, as defined by its values on the cylinder sets defined via all
finite words. Their frequencies can be extracted from the
frequency-normalised Perron--Frobenius eigenvectors of the induced
substitution matrices for Thue--Morse words of length $n$; see
\cite[Sec.~5.4.3]{Qbook} or \cite[Sec.~4.8.3]{BGbook} for details. The
frequency module of the Thue--Morse system (that is, the Abelian group
generated by all occurring frequencies) is given by
\begin{equation}\label{eq:fm}
  \cM_{\mu} \, = \, \Big\{ \myfrac{m}{3\cdot2^r} :
  r\in\NN_0, m\in\ZZ \Big\} .
\end{equation}
In particular, all word frequencies are integer linear combinations of
letter frequencies and frequencies of words of length $2$---which
gives one way to prove \eqref{eq:fm}. Indeed, the single $3$ in the
denominator emerges from the frequencies of words of length $2$, while
the powers of $2$ reflect the substitution structure. One can then
check that $\cM_{\mu}$, in the parametrisation used, is an Abelian
group, and the smallest one that contains all word frequencies.

While this view is, in some ways, satisfactory, it is still incomplete
in the sense that the calculation of the frequencies is not
trivial. This emphasises an alternative viewpoint via the
correlations, which also completely determine the measure $\mu$
because they comprise the patch frequencies via suitable choices of
the weight function $f$ and the number of points in \eqref{eq:eta-f}.
This is one of our motivations to study the correlations $\eta^{}_{f}$
in some generality.

\section{Pair correlations for balanced weights}\label{sec:balanced}

Let us consider the correlations for balanced weights,
$\{ \pm \ts 1 \}$.  The standard two-point correlation coefficients
$\eta(m)$ of the Thue--Morse sequence {(and system)}, which are also
known as the autocorrelation coefficients, are usually introduced as
\[
  \eta(m) \, = \lim_{N\to\infty}\myfrac{1}{N}
  \sum_{k=0}^{N\nts -1} t^{}_k \ts t^{}_{k+m} \ts ,
\]
which is consistent with our above definition. By symmetry, one has
$\eta(-m)=\eta(m)$, which follows easily after dropping finitely many
terms from the sum. Further, we clearly get $\eta(0)=1$, and, for
$m\geqslant 0$, one finds the repeatedly derived recursions
\cite{M1927,Kaku,BGbook}
\begin{equation}\label{eq:v2recs}
\begin{split}
  \eta(2m) \, & = \, \eta(m) \qquad\mbox{and} \\
  \eta(2m+1) \, & = -\myfrac{1}{2}\big(\eta(m)+\eta(m+1)\big) ,
\end{split}  
\end{equation}
which are a direct consequence of the substitution structure.  These
recurrences allow one to compute all of the values of $\eta$ from
$\eta(0)$. In particular, one has
\begin{equation}\label{eq:initial}
  \eta(1) \, = \, - \frac{\eta (0)}{3} \, = \,
  -\myfrac{1}{3}\ts ,
\end{equation}
by solving the second equation in \eqref{eq:v2recs} with $m=0$ for
$\eta (1)$. We can write the pair of recurrence equations
\eqref{eq:v2recs} in matrix form as
\[
  \left(\begin{matrix} \eta(2m)\\
      \eta(2m+1)\end{matrix}\right)=\myfrac{1}{2}\left(\begin{matrix} 2
      & 0\\ -1 &-1 \end{matrix}\right)\left(\begin{matrix} \eta(m)\\
      \eta(m+1)\end{matrix}\right),
\]
which is valid for $m\geqslant 0$. As this rational matrix will be
important for us later, we record a few points of interest
here. Firstly, it has eigenvalues $1$ and $-1/2$, with right
eigenvectors $(1,-1/3)^T$ and $(0,1)^T$, respectively. Next, set
\begin{equation}\label{eq:E-def}
  \bs{E}^{}_0 \, \defeq \, \begin{pmatrix} 1 & 0\\ 0
    &-1 \end{pmatrix}, \quad
     \bs{E}^{}_1 \, \defeq \, \begin{pmatrix}
      0 & -1\\ 0  &1 \end{pmatrix}\quad
    \mbox{and}\quad\bs{J} \, \defeq \,
    \begin{pmatrix} 0 & 1\\ 1 &0 \end{pmatrix} .
\end{equation}
Note that $\bs{E}^{}_0$ and $\bs{J}$ are involutions, while
$\bs{E}^{}_{1}$ is an idempotent. For a $2{\times}2$ matrix $\bs{A}$,
define
$\bs{A}' \defeq \bs{J}\! \bs{A} \ts \bs{J}^{-1} = \bs{J} \! \bs{A} \ts
\bs{J}$. Then, one has $\bs{A}''=\bs{A}$, while
$\bs{E}_0' = -\bs{E}^{}_0$.  In particular, this yields the
decomposition
\[
     \left(\begin{matrix} 2 & 0\\ -1
      &-1 \end{matrix}\right) \, = \,
    \bs{E}^{}_0+\bs{J}\nts\bs{E}^{}_1 \ts
    \bs{J} \, = \, \bs{E}^{}_0+\bs{E}_1'  .
\]

\begin{remark}\label{rem:reno}
  The recursion relations \eqref{eq:v2recs} define an infinite set of
  linear equations for the numbers $\eta (m)$ with $m\in\NN_0$. This
  set contains a (maximal) finite subset of equations that is closed,
  in the sense that they are equations for finitely many coefficients
  among themselves (an no others), while all remaining coefficients
  are then fully determined recursively from these ones. Here, they
  are the two equations for $m=0$, namely
\[
    \eta (0) \, = \, \eta (0) \quad\text{and}\quad
    \eta (1) \, = \, -\myfrac{1}{2}\big(\eta(0)+\eta(1)\big) .
\]   
The first one is a tautology, and simply an indication that the
recursion relations alone do not specify the value of $\eta (0)$. The
second specifies $\eta (1)$ as a function of $\eta (0)$, as we
saw in \eqref{eq:initial}, while all $\eta (m)$ with $m \geqslant 2$
are then determined recursively. In other words, the linear solution
space to \eqref{eq:v2recs} is one-dimensional, and once $\eta (0)$ is
given, $\eta$ is completely specified.
   
This structure is quite typical for a set of renormalisation
equations. The infinite set of equations contains a (maximal) finite
subset that is closed, often called the \emph{self-consistency part},
while all others quantities are fixed recursively. This structure has
recently been identified in more general inflation tilings, both in
one and in higher dimensions, and gives access to various spectral
properties; see \cite{BG,BGM2019} and references therein.  \exend
\end{remark}

\begin{remark}
  The coefficients $\eta(m)$ from \eqref{eq:v2recs} define a function
  $\eta\colon\ZZ\longrightarrow\RR$ which is positive definite and
  defines a measure $\gamma=\eta\cdot\delta_{\ZZ}$ that is a positive
  definite pure point measure on $\ZZ$. As such, it is Fourier
  transformable, which gives the positive measure
\[
  \widehat{\gamma} \, = \, \nu_{_\mathrm{TM}}*\delta^{}_{\ZZ} \ts ,
\]
where $\nu^{}_{_\mathrm{TM}}$ is a purely singular continuous
probability measure on the torus $\TT=\RR/ \ZZ$. Using the standard
unit interval as a model for $\mathbb{T}$, one obtains the Riesz
product representation
\[
   \nu^{}_{_\mathrm{TM}}  \, = \,
   \prod_{\ell=0}^\infty\big(1-\cos (2^{\ell+1}\pi x)\big) ,
\]
which converges weakly. Here, the right-hand side is the standard
notation for a sequence of absolutely continuous measures, each given
by its Radon--Nikodym density, with the limit being singular
continuous; see \cite[Ch.~10.1]{BGbook} and references therein for
details. In particular, the pointwise limit of the right-hand side
vanishes on a set of full measure, while it diverges, or is
ill-defined, on an uncountable null set. The details of such measures
are best studied via the thermodynamic formalism \cite{BGKS}.
  
The proof of the singular continuity of $\nu_{_\mathrm{TM}}$ rests
upon two properties of $\eta$. First, by showing
\begin{equation}\label{eq:Wiener}
     \lim_{N\to\infty} \myfrac{1}{N} \sum_{m=0}^{N\nts -1} \eta (m)^2
     \, = \, 0 \ts ,
\end{equation}
compare \cite[Lemma~10.2]{BGbook}, one excludes pure point
contributions, via Wiener's criterion. Then, since
$\eta (2^{\ell}) = -1/3$ for all $\ell\in\NN$, one can employ the
Riemann--Lebesgue lemma to show that no absolutely continuous
component can exist; see \cite{Kaku,BGbook} for the details.  \exend
\end{remark}

Beyond \eqref{eq:Wiener}, also the mean vanishes asymptotically,
\begin{equation}\label{eq:mean}
    \lim_{n\to\infty} \myfrac{1}{N} \sum_{m=0}^{N\nts -1} \eta (m)
     \, = \, 0 \ts .
\end{equation}
To see why this is true, set
$\Sigma (N) = \frac{1}{N} \sum_{m=0}^{N\nts -1} \eta (m)$ and observe
that, as $N\to\infty$, one has
$\Sigma (2N+1) = \frac{2N}{2N+1} \Sigma(2N) + O (N^{-1})$.  Now, since
$\lvert \eta (m)\rvert \leqslant 1$ for all $m\in\NN_0$, one finds
\[
\begin{split}
   \Sigma (2N) \, & = \, \myfrac{1}{2} \biggl( \myfrac{1}{N}
   \sum_{m=0}^{N\nts -1} \eta (2m) \, + \, \myfrac{1}{N}
   \sum_{m=0}^{N\nts -1} \eta (2m+1) \biggr) \\[1mm]
   & = \, \myfrac{1}{2} \biggl( \myfrac{1}{N}
   \sum_{m=0}^{N\nts -1} \eta (m) \, - \, \myfrac{1}{2N}
   \sum_{m=0}^{N\nts -1} \bigl( \eta (m) + \eta (m+1) \bigr) \biggr) \\[2mm]
   & = \, \frac{\Sigma (N)}{2} - \frac{\Sigma (N)}{4}
   - \frac{\Sigma (N)}{4} + \frac{\eta (0) - \eta (N)}{4N}\\[3mm] 
   & = \, \frac{1-\eta(N)}{4N} \, = \,     O (N^{-1})
\end{split}
\]
as $N\to\infty$, where \eqref{eq:v2recs} was used in the second line,
and \eqref{eq:mean} is obvious from here. In fact, the above
derivation gives the stronger relation
\[
  \sum_{m=0}^{2N\nts -1} \eta (m) \, = \, \frac{1-\eta (N) }{2}
  \, \in \, \bigl[ 0, \tfrac{2}{3} \bigr] .
\]
Further, using the result from \eqref{eq:v2recs} and \eqref{eq:Wiener}
in conjunction with $|\eta(n)| \leqslant 1$, one finds the following
consequence.

\begin{prop}\label{prop:eta-means}
  For\/ $k\in\NN_0$, one has\/
  $\,\lim_{N\to\infty} \frac{1}{N} \sum_{m=0}^{N\nts -1}\, \eta(m)^{k}
  = \delta^{}_{k,0}\ts $.  Furthermore, one has\/
  $\,\lim_{N\to\infty} \frac{1}{N} \sum_{m=0}^{N\nts -1}\,
  |\eta(m)|^{k} = 0$ for all\/ $k \geqslant 2$.  \qed
\end{prop}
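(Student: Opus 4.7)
The plan is to treat each case according to which of the two established limits dominates, leveraging the uniform bound $\lvert \eta(m)\rvert \leqslant 1$ to reduce all higher exponents to the $k=2$ case.

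First I would dispose of the trivial and linear cases. For $k=0$, the Cesàro mean is identically $1$, matching $\delta^{}_{0,0}$. For $k=1$, the statement is precisely the mean-vanishing relation \eqref{eq:mean} derived just before the proposition.

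For $k\geqslant 2$, the key observation is that $\lvert \eta(m) \rvert \leqslant 1$ implies $\lvert \eta(m)^k \rvert \leqslant \eta(m)^2$ and $\lvert \eta(m) \rvert^k \leqslant \eta(m)^2$. Therefore,
\[
  \biggl\lvert \myfrac{1}{N} \sum_{m=0}^{N-1} \eta(m)^k \biggr\rvert
  \, \leqslant \, \myfrac{1}{N} \sum_{m=0}^{N-1} \lvert \eta(m) \rvert^k
  \, \leqslant \, \myfrac{1}{N} \sum_{m=0}^{N-1} \eta(m)^2 \ts ,
\]
and the right-hand side tends to $0$ by Wiener's relation \eqref{eq:Wiener}. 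This simultaneously handles both claims for $k\geqslant 2$.

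There is really no serious obstacle here: the proposition is a direct consequence of \eqref{eq:Wiener} and \eqref{eq:mean} combined with the pointwise bound $\lvert \eta(m) \rvert \leqslant 1$. The only point worth underlining is why the bound $\lvert \eta(m) \rvert \leqslant 1$ holds; this follows at once from the definition of $\eta(m)$ as an average of products of $\pm 1$ values, or equivalently from $\eta$ being a positive definite function normalised by $\eta(0)=1$.
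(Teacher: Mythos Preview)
Your argument is correct and matches the paper's own reasoning. The paper does not spell out a proof either: it simply notes, just before the proposition, that it follows from \eqref{eq:v2recs} (which yields \eqref{eq:mean} for $k=1$) and \eqref{eq:Wiener} (the $k=2$ case) together with $\lvert \eta(n)\rvert \leqslant 1$, which is exactly the reduction you carry out.
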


Now, define $\mu^{}_{\pm} = \frac{1 \pm \eta}{2}$, which means
$\mu^{}_{+} (n) + \mu^{}_{-}(n) = 1$ and
$\mu^{}_{+} (n) - \mu^{}_{-}(n) = \eta(n)$ for all $n\in\NN_0$.  From
Proposition~\ref{prop:eta-means}, one then finds
\begin{equation}
  \lim_{N\to\infty} \myfrac{1}{N} \sum_{m=0}^{N\nts - 1}
  \mu^{}_{\pm} (m)^{k} \, = \, 2^{-k} 
\end{equation}
for $k\in\NN_0$. In fact, one obtains the coupled recursion relations
\[
  \mu^{}_{\pm} (2m) \, = \, \mu^{}_{\pm} (m)
  \quad \text{and} \quad \mu^{}_{\pm} (2m+1) \, = \, \myfrac{1}{2}
  \bigl( \mu^{}_{\mp} (m) + \mu^{}_{\mp} (m+1) \bigr)  
\]
for $m\in\NN_0$. These also follow from the general renormalisation
relations in \cite[Eq.~(16)]{BG} by observing the letter exchange
symmetry of the Thue--Morse {system} under $a \leftrightarrow b$.
This connection has the following immediate consequence.

\begin{coro}
  If\/ $\nu^{}_{\alpha \beta} (n)$ with\/
  $\alpha, \beta \in \{ a, b\}$ and\/ $n\in\ZZ$ denotes the relative
  frequency of occurrence of the distance\/ $n$ in the Thue--Morse
  sequence\/ $w$ between a letter of type\/ $\alpha$ to the left and a
  letter of type\/ $\beta$ to the right, with the obvious inversion
  for negative\/ $n$, one has
\[   
   \nu^{}_{aa} (n)  \, = \, \nu^{}_{bb} (n) \, = \,
    \mu^{}_{+} (\lvert n \rvert ) \, = \, 
    \frac{ 1+\eta (\lvert n \rvert) }{2} 
   \quad \text{and} \quad 
   \nu^{}_{ab} (n) \, = \, \nu^{}_{ba} (n) \, = \, 
   \mu^{}_{-} (\lvert n \rvert ) \, = \,
   \frac{1 - \eta (\lvert n \rvert )}{2}
\]
  for all\/ $n\in\ZZ$.    \qed
\end{coro}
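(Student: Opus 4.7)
The plan is to reduce the statement to a small linear system relating the joint letter frequencies to $\eta(n)$, and then convert from joint to conditional frequencies. Let $p^{}_{\alpha\beta}(n)$ denote the joint frequency that a given position carries the letter $\alpha$ while the position at distance $n$ carries $\beta$. Two symmetries constrain these four numbers. First, the two letter frequencies both equal $1/2$ (from the Perron--Frobenius eigenvector of the substitution matrix of $\varrho^{}_{_\mathrm{TM}}$), which forces $p^{}_{aa}(n) + p^{}_{ab}(n) = 1/2$ (and the analogue with $ab$ replaced by $ba$). Second, the $a \leftrightarrow b$ exchange symmetry of $\varrho^{}_{_\mathrm{TM}}$ lifts to an involution of the uniquely ergodic system $(\XX, \mu)$, giving $p^{}_{aa}(n) = p^{}_{bb}(n)$ and $p^{}_{ab}(n) = p^{}_{ba}(n)$.

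Next, I would rewrite the autocorrelation in terms of these joint frequencies via the weighting $a = 1$, $b = -1$, yielding
\[
   \eta(n) \, = \, p^{}_{aa}(n) + p^{}_{bb}(n) - p^{}_{ab}(n) - p^{}_{ba}(n)
   \, = \, 2 \bigl( p^{}_{aa}(n) - p^{}_{ab}(n) \bigr) .
\]
Combined with $p^{}_{aa}(n) + p^{}_{ab}(n) = 1/2$, this immediately gives $p^{}_{aa}(n) = (1+\eta(n))/4$ and $p^{}_{ab}(n) = (1-\eta(n))/4$. Since $\nu^{}_{\alpha\beta}(n)$ is the conditional frequency given that the left letter is $\alpha$, dividing by the marginal $1/2$ yields $\nu^{}_{aa}(n) = 2 p^{}_{aa}(n) = (1+\eta(n))/2 = \mu^{}_{+}(n)$ and $\nu^{}_{ab}(n) = \mu^{}_{-}(n)$ for $n \geqslant 0$. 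The case $n < 0$ then follows from the tautological identity $\nu^{}_{\alpha\beta}(-n) = \nu^{}_{\beta\alpha}(n)$ combined with the two symmetries already established, which absorb the sign of $n$ into the absolute value; the same identity also yields the asserted symmetry $\nu^{}_{aa} = \nu^{}_{bb}$ and $\nu^{}_{ab} = \nu^{}_{ba}$ on all of $\ZZ$.

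The only real (and fairly mild) obstacle is justifying the $a \leftrightarrow b$ symmetry at the level of the measure $\mu$: unique ergodicity does most of the work, since the pushforward of $\mu$ under the letter-exchange involution $x \mapsto -x$ on $\XX$ is again shift-invariant and hence must coincide with $\mu$. A conceptually different route would be to invoke the renormalisation relations for $\mu^{}_{\pm}$ stated immediately before the corollary, verify the analogous relations for $\nu^{}_{aa}$ and $\nu^{}_{ab}$ from the substitution structure, and match initial values; the direct algebraic route above is considerably shorter and essentially amounts to a $2 \times 2$ inversion.
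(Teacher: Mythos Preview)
Your argument is correct. The paper does not spell out a proof here---the corollary is presented as an immediate consequence of the sentence preceding it, namely that the coupled recursions just derived for $\mu_\pm$ coincide (after invoking the $a\leftrightarrow b$ symmetry) with the general renormalisation relations from \cite[Eq.~(16)]{BG}, which are the recursions satisfied by the $\nu_{\alpha\beta}$. So the paper's implied reasoning is precisely the alternative route you sketch at the end: identify $\nu_{aa},\nu_{ab}$ and $\mu_+,\mu_-$ as solutions of the same linear recursion with the same initial data, and conclude equality from the one-dimensionality of the solution space (Remark~\ref{rem:reno}).

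Your primary route is genuinely different and more self-contained. It sidesteps the recursion entirely, extracting the joint frequencies $p_{\alpha\beta}(n)$ directly from the identity $\eta(n)=p_{aa}(n)+p_{bb}(n)-p_{ab}(n)-p_{ba}(n)$ together with the marginal constraint and the letter-exchange symmetry, and then passing to $\nu_{\alpha\beta}$ by the factor $2$. This avoids the external reference to \cite{BG} and makes transparent that the statement is really just the observation that the balanced autocorrelation already encodes the four pair frequencies once the letter densities and the symmetry are known. The paper's recursion-matching route, by contrast, ties the corollary into the renormalisation theme developed throughout (compare Remark~\ref{rem:reno}), which is presumably why the authors present it that way. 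One small point you might make explicit: the involution $x\mapsto -x$ does map $\XX$ to itself, because the Thue--Morse substitution commutes with letter exchange and hence so does its language; you rely on this when pushing $\mu$ forward.
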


It remains to understand the mean value of $|\eta(m)|$. Via the
Cauchy--Schwarz inequality in conjunction with $|\eta(m)|\leqslant 1$,
it is elementary to derive the relation
\[
  \biggl( \myfrac{1}{N} \sum_{m=0}^{N\nts - 1} |\eta (m)|
  \biggr)^2 \, \leqslant \, \myfrac{1}{N} \sum_{m=0}^{N\nts - 1}
  \eta(m)^2 \, \leqslant \, \myfrac{1}{N} \sum_{m=0}^{N\nts - 1}
  |\eta (m)| \ts ,
\]
so that \eqref{eq:Wiener} implies
$\,\lim_{N\to\infty} \frac{1}{N} \sum_{m=0}^{N\nts -1} |\eta(m)| = 0$,
as expected. In fact, not only is the mean value of $|\eta(m)|$ equal 
to zero, we can say more as follows. 

\begin{theorem}\label{thm:meanalpha} 
  For any\/ $\alpha> \frac{\log (3)}{\log (4)} \approx
  0.792{\ts}481{\ts}505 $, we have
\[
   \lim_{N\to\infty} \myfrac{1}{N^{\alpha}} 
   \sum_{m=0}^{N\nts - 1}   |\eta(m) |  \, = \,  0 \ts .
\]
\end{theorem}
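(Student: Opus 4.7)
The plan is to exploit cancellations built into the recursion~\eqref{eq:v2recs} by processing sums in groups of four. Setting $F(N) \defeq \sum_{m=0}^{N-1}|\eta(m)|$, I would first split $F(2N)$ into even- and odd-indexed terms; applying \eqref{eq:v2recs} then gives the elementary identity $F(2N) = F(N) + \frac{1}{2} G(N)$, where $G(N) \defeq \sum_{m=0}^{N-1}|\eta(m) + \eta(m+1)|$.

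The crucial creative step is the following observation: a direct calculation from \eqref{eq:v2recs} gives $\eta(2k) + \eta(2k+1) = \frac{1}{2}(\eta(k) - \eta(k+1))$ and $\eta(2k+1) + \eta(2k+2) = -\frac{1}{2}(\eta(k) - \eta(k+1))$. Summing absolute values pairwise across $G(2N)$ therefore collapses each pair of consecutive terms to $|\eta(k) - \eta(k+1)|$, producing the identity
\[
   G(2N) \, = \, H(N) , \quad \text{where } \,
   H(N) \, \defeq \sum_{m=0}^{N-1} |\eta(m) - \eta(m+1)| .
\]
Applying $F(2N) = F(N) + G(N)/2$ twice and inserting the identity $G(2N) = H(N)$ then yields $F(4N) = F(N) + (G(N) + H(N))/2$. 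The elementary relation $|a+b| + |a-b| = 2 \max(|a|, |b|) \leqslant 2(|a| + |b|)$, combined with $F(N+1) = F(N) + |\eta(N)| \leqslant F(N) + 1$, bounds $G(N) + H(N) \leqslant 2\ts (F(N) + F(N+1) - 1)$ and so leads to the closed recurrence $F(4N) \leqslant 3\ts F(N)$.

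From $F(1) = 1$, induction yields $F(4^k) \leqslant 3^k$ for all $k \geqslant 0$. For general $N \geqslant 1$, choosing $k$ with $4^k \leqslant N < 4^{k+1}$ and invoking monotonicity of $F$ produces the uniform estimate $F(N) \leqslant 3\ts N^{\log(3)/\log(4)}$, from which the theorem follows immediately. The only non-routine ingredient in this plan is the identity $G(2N) = H(N)$; once it is in hand, the remainder is a one-line induction, and the resulting bound is in fact strictly stronger than the qualitative statement asked for.
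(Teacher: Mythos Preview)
Your proof is correct and follows essentially the same route as the paper: both arguments process the sum in blocks of four, use the recursion~\eqref{eq:v2recs} to express $\eta(4m+r)$ in terms of $\eta(m)$ and $\eta(m+1)$, and arrive at the key inequality $F(4N)\leqslant 3F(N)$, from which the claim follows by iteration. Your bookkeeping via the auxiliary sums $G(N)$ and $H(N)$ and the identity $G(2N)=H(N)$ is a slightly cleaner packaging of the same computation the paper does termwise in its estimate~\eqref{eq:esti}, but the underlying idea and the resulting bound are identical.
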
 

\begin{proof}
To start, note that, when $N\in[2^{2\ell},2^{2\ell+2}]$,
  one has the estimate
\[
  \myfrac{1}{N^{\alpha}}\sum_{m=0}^{N\nts -1}|\eta (m)|
  \, \leqslant \,
  \myfrac{1}{(2^{2\ell})^{\alpha}}\sum_{m=0}^{2^{2\ell+2}-1}|\eta (m)|
  \, = \,
  \myfrac{4^{\alpha}}{(2^{2\ell+2})^{\alpha}}\sum_{m=0}^{2^{2\ell+2}-1}|
  \eta (m)| \ts ,
\]
which implies that it suffices to show
$2^{-(2\ell+2){\alpha}}\sum_{m=0}^{2^{2\ell+2}-1}|\eta (m)| = o ( 1 )$
as $\ell \to \infty$.

To this end, note that the recurrences \eqref{eq:v2recs} imply the
estimates
\begin{equation}\label{eq:esti}
\begin{split}
   |\eta(4m)| \, & = \, |\eta(m)|  \ts , \\[1mm]
   |\eta(4m+1)| \, & = \, \myfrac{1}{2}\big|
   \eta(2m)+\eta(2m+1)\big| \, = \, \myfrac{1}{2}
   \big| \eta(m)-\myfrac{1}{2}\big(\eta(m)+\eta(m+1)\big) \big|  \\[1mm]
  &\qquad \quad \leqslant \, \myfrac{1}{4} |\eta(m)|+
     \myfrac{1}{4}|\eta(m+1)| \ts ,  \\[1mm]
  |\eta(4m+2)| \, & = \, |\eta(2m+1)| \, \leqslant \,
    \myfrac{1}{2} |\eta(m)| + \myfrac{1}{2} |\eta(m+1)| \ts , \\[1mm]
  |\eta(4m+3)| \, & = \, \myfrac{1}{2} \big| \eta(2m+1)+
       \eta(2m+2)\big| \, = \, \myfrac{1}{2} \big|
       \eta(m+1) -\myfrac{1}{2} \big(\eta(m)+\eta(m+1)\big)\big| \\[1mm]
  &\qquad \quad \leqslant \, \myfrac{1}{4} |\eta(m)|+
       \myfrac{1}{4} |\eta(m+1)| \ts .
\end{split} 
\end{equation}
As in the statement of the theorem, we let
$\alpha> \frac{\log (3)}{\log (4)}$ and set
$\Sigma' (N) = \frac{1}{N^{\alpha}} \sum_{m=0}^{N\nts -1} |\eta
(m)|$. Arguing as above in \eqref{eq:esti}, we have
{\allowdisplaybreaks
\begin{align*}
     \Sigma' (4N) \, & = \, \myfrac{1}{(4N)^{\alpha}} \sum_{r=0}^{3}
   \sum_{m=0}^{N\nts -1} |\eta (4m + r)|  \\[2mm]
   & \leqslant \, \myfrac{1}{4^{\alpha}} \biggl( \myfrac{1}{N^{\alpha}}
   \sum_{m=0}^{N\nts -1} |\eta (m)| \, + \, \myfrac{1}{4N^{\alpha}}
   \sum_{m=0}^{N\nts -1} \bigl( |\eta (m)| + |\eta (m+1)| \bigr)\\[1mm]
   &\qquad\quad + \, \myfrac{1}{2N^{\alpha}}
   \sum_{m=0}^{N\nts -1} \bigl( |\eta (m)| + |\eta (m+1)| \bigr) 
        \, + \, \myfrac{1}{4N^{\alpha}} \sum_{m=0}^{N\nts-1} 
   \bigl( |\eta (m)| + |\eta (m+1)| \bigr) \biggr) \\[2mm]
   & = \, \myfrac{3}{4^{\alpha}} \,  \Sigma' (N) + 
   \frac{|\eta (N)| - |\eta (0)|}{4N^{\alpha}} \, \leqslant \,
    \myfrac{3}{4^{\alpha}} \, \Sigma' (N) \ts ,
\end{align*}}
since $|\eta(N)|-|\eta(0)|=|\eta(N)|-1\leqslant 0$. Now, letting 
$N=2^{2\ell+2}$, for $\ell\geqslant 0$, we get
\begin{equation*}
    \frac{1}{2^{(2\ell+2)\alpha}} \sum_{m=0}^{2^{2\ell+2}-1} 
    |\eta (m)| \, = \, \Sigma'(2^{2\ell+2}) \, \leqslant \,
    \myfrac{3}{4^{\alpha}} \, \Sigma' (2^{2\ell})
    \, \leqslant \, \Bigl(\myfrac{3}{4^{\alpha}}\Bigr)^{\ell+1} 
    = \, o(1) \ts . 
\qedhere\end{equation*}
\end{proof}

\begin{remark} 
  The lower bound of $\log(3)/\log (4)$ {from}
  Theorem~\ref{thm:meanalpha} is reminiscent of the result that {there
    is a $C>0$ such that}
\[
    \sup_{\theta\in[0,1]}\,\biggl|\sum_{k\leqslant x}
    \mathrm{e}^{2\pi \mathrm{i} k \theta} t^{}_k \biggr|
    \, \leqslant \, C x^{\log(3)/\log(4)}, 
\]
wherein the exponent $\log(3)/\log (4)$ is optimal; see
\cite{G1968,N1969,NS1975}.  At first glance, one may conjecture that
this exponent is also optimal in Theorem~\ref{thm:meanalpha}, but this
is not the case. In fact, if, instead of partitioning the positive
integers into arithmetic progressions with common difference $4$ in
\eqref{eq:esti}, we use common difference $8$, we have that
Theorem~\ref{thm:meanalpha} holds for any
$\alpha>\log(5)/\log(8)\approx0.773 {\ts} 976 {\ts} 031 {\ts} 3.$
Using larger common differences, the bound for $\alpha$
lowers. Computationally, we have gone up to common difference
$2^{20}$, which shows that one may take any
$\alpha > 0.652 {\ts} 632 {\ts} 647 {\ts} 6$ in
Theorem~\ref{thm:meanalpha}.  We leave the determination of the
optimal value of $\alpha$ for further investigation.  \exend
\end{remark}

Next, using Theorem \ref{thm:meanalpha} with the value $\alpha=1$ 
gives us the following result.

\begin{theorem}\label{thm:means}
For every real\/ $\beta>0$, 
\[
   \lim_{N\to\infty} \myfrac{1}{N} 
   \sum_{m=0}^{N\nts - 1}  |\eta(m) |^{\beta} \, = \, 0 \ts .
\]   
\end{theorem}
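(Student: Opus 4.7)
The plan is to split into the two regimes $\beta \geqslant 1$ and $0 < \beta < 1$, and to reduce both to the already established fact that $\frac{1}{N}\sum_{m=0}^{N-1}|\eta(m)|\to 0$, which is Theorem~\ref{thm:meanalpha} with $\alpha=1$ (any $\alpha$ strictly greater than $\log 3/\log 4$ will do, and $1$ comfortably qualifies).

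For the regime $\beta\geqslant 1$, the estimate is immediate. Since $|\eta(m)|\leqslant 1$ for every $m\in\NN_0$, raising to a power $\beta\geqslant 1$ can only decrease the value, so $|\eta(m)|^{\beta}\leqslant |\eta(m)|$ termwise. Averaging and invoking Theorem~\ref{thm:meanalpha} closes this case.

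For the regime $0<\beta<1$, I would use concavity of $t\mapsto t^{\beta}$ on $[0,\infty)$. Jensen's inequality applied to the uniform probability measure on $\{0,1,\dots,N-1\}$ gives
\[
   \myfrac{1}{N} \sum_{m=0}^{N-1} |\eta(m)|^{\beta} \, \leqslant \,
   \biggl( \myfrac{1}{N} \sum_{m=0}^{N-1} |\eta(m)| \biggr)^{\!\beta} ,
\]
(equivalently, one can apply Hölder with exponents $1/\beta$ and $1/(1-\beta)$). Since the quantity in parentheses tends to $0$ by Theorem~\ref{thm:meanalpha}, so does its $\beta$-th power, which settles the remaining range.

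I do not expect a serious obstacle here: the theorem is really a packaging of the $\alpha=1$ case of Theorem~\ref{thm:meanalpha} together with the trivial bound $|\eta(m)|\leqslant 1$. The only point worth stating carefully is that the two ranges of $\beta$ genuinely require different elementary inequalities (monotonicity for $\beta\geqslant 1$, concavity for $\beta<1$), but both feed into the same limit input.
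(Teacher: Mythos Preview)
Your proof is correct and follows essentially the same approach as the paper's: both reduce everything to Theorem~\ref{thm:meanalpha} with $\alpha=1$ via an elementary H\"older/Jensen estimate combined with $|\eta(m)|\leqslant 1$. The only cosmetic difference is that the paper avoids the case split by applying a single H\"older inequality, bounding $\bigl(\tfrac{1}{N}\sum_m|\eta(m)|^{\beta}\bigr)^{1+\beta}$ by $\bigl(\tfrac{1}{N}\sum_m|\eta(m)|^{1+\beta}\bigr)^{\beta}$, after which $|\eta(m)|^{1+\beta}\leqslant|\eta(m)|$ holds for every $\beta>0$ at once.
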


\begin{proof} 
H\"{o}lder's inequality gives 
\begin{equation}\label{eq:holder}
    \biggl(\myfrac{1}{N}\sum_{m=0}^{N\nts -1 }
    |\eta(m)|^{\beta}\biggr)^{1+\beta} \, \leqslant \;
     \biggl( \, \sum_{m=0}^{N\nts -1 }
     |\eta(m)|^{1+\beta} \biggr)^{\beta} \,
     \sum_{m=0}^{N\nts -1 }\myfrac{1}{N^{1+\beta}}
     \: = \, \biggl( \myfrac{1}{N}\sum_{m=0}^{N\nts -1 }
     |\eta(m)|^{1+\beta}\biggr)^{\beta}.
\end{equation}
Since $|\eta(m)|\leqslant 1$ for all $m$, we have
$|\eta(m)|^{1+\beta}\leqslant |\eta(m)|$ for all $\beta\geqslant 0$.
So, for $\beta>0$, the right-hand side (and then also the left-hand
one) of \eqref{eq:holder} limits to zero as $N$ grows.
\end{proof}

We are now set to consider higher-order correlation functions.

\section{General correlations for balanced
  weights}\label{sec:balanced-gen}

Analogously, we define the $n$-point correlations of the Thue--Morse
system by
\[
  \eta (m^{}_1 ,m^{}_2,\ldots, m^{}_{n-1}) \, =
  \int_{\XX} x^{}_0 \cdot  (S^{\ts m^{}_1}x)^{}_0
  \cdots (S^{\ts m^{}_{n-1}}x)^{}_0 \dd \mu(x) \ts .
\]
Similar to the above, these $n$-point correlations can be computed
recursively.

\begin{prop}\label{prop:reduction} 
  For each\/ $i\in\{1,\ldots,n\ts {-}1\}$, let\/ $r^{}_i \in\{0,1\}$
  and set\/ $r = r^{}_1+\cdots+r^{}_{n-1}$. Then, for any integers\/
  $m^{}_1, \ldots, m^{}_{n-1} \geqslant 0$, we have
\[
  \begin{split}
    \eta(2 m^{}_1 + r^{}_1, \ldots, & 2 m^{}_{n-1} + r^{}_{n-1})
     \, = \, \\[1mm]
    & \frac{(-1)^r}{2} \big( \eta (m^{}_1 , \ldots, m^{}_{n-1})
     + (-1)^{n} \eta ( m^{}_1 + r^{}_1 , \ldots, 
     m^{}_{n-1} + r^{}_{n-1}) \big) .
  \end{split}
 \]     
\end{prop}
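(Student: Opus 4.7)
The plan is to evaluate the left-hand side through its Birkhoff-average representation, which is available because $(\XX, \ZZ)$ is strictly ergodic; taking $y = w$ (so that $y_k = t_k$ for $k \geqslant 0$) converts the claim into an identity between limits of finite sums,
$$\eta(2m_1 + r_1, \ldots, 2m_{n-1} + r_{n-1}) \, = \lim_{N\to\infty}\myfrac{1}{N} \sum_{k=0}^{N\nts -1} t^{}_k \prod_{i=1}^{n-1} t^{}_{k + 2m_i + r_i} \ts .$$
I will then split this average according to the parity of $k$ and apply the Thue--Morse recursions \eqref{eq:t-rec} to each factor, so that the $2$-to-$1$ decimation induced by the substitution reduces a correlation at doubled indices back to correlations at the original scale.

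For the even contribution $k = 2\ell$, the identity $t^{}_{2\ell + 2m_i + r_i} = (-1)^{r_i} t^{}_{\ell + m_i}$ is immediate from \eqref{eq:t-rec}, and together with $t^{}_{2\ell} = t^{}_\ell$ the integrand collapses to $(-1)^r \ts t^{}_\ell \ts t^{}_{\ell + m_1} \cdots t^{}_{\ell + m_{n-1}}$. Dividing by $N$ and passing to the limit, noting that the subsum has asymptotically $N/2$ terms, yields the contribution $\frac{1}{2} (-1)^r \eta(m_1, \ldots, m_{n-1})$.

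The odd contribution $k = 2\ell + 1$ is the step I expect to require the most careful bookkeeping, since $2\ell + 1 + 2m_i + r_i = 2(\ell + m_i) + (1 + r_i)$ changes parity with $r_i$. A short case distinction on $r_i \in \{0,1\}$ nevertheless gives the uniform rule $t^{}_{2\ell + 1 + 2m_i + r_i} = -(-1)^{r_i} t^{}_{\ell + m_i + r_i}$: for $r_i = 0$ the index is odd and produces $-t^{}_{\ell + m_i}$, whereas for $r_i = 1$ the index is even and produces $t^{}_{\ell + m_i + 1}$, both of which match the formula. Combining this with $t^{}_{2\ell+1} = -t^{}_\ell$, the odd-$k$ product becomes $(-1)^{n-r} t^{}_\ell \prod_{i=1}^{n-1} t^{}_{\ell + m_i + r_i}$, contributing $\frac{1}{2} (-1)^{n-r} \eta(m_1 + r_1, \ldots, m_{n-1} + r_{n-1})$ in the limit.

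Adding the two contributions and using $(-1)^{n-r} = (-1)^n (-1)^r$ then produces exactly the identity claimed in the proposition. No analytic difficulty enters beyond the existence of the Birkhoff averages, which is automatic from strict ergodicity; the only genuine care is the combinatorial sign tracking in the odd-$k$ case, which is the main (though mild) obstacle.
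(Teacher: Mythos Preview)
Your proposal is correct and follows essentially the same route as the paper's own proof: express the correlation as a Birkhoff average (via strict ergodicity), split the summation index into its even and odd residues, and apply the Thue--Morse recursion \eqref{eq:t-rec} to each factor. The paper does this slightly more cleanly by taking the average over $\{0,\ldots,2N\!-\!1\}$ so that the two parity classes have exactly $N$ terms each, rather than arguing that ``the subsum has asymptotically $N/2$ terms'', but this is a cosmetic difference only.
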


\begin{proof} 
  Observe that, by Birkhoff's ergodic theorem in conjunction with the
  absolute convergence and hence rearrangement invariance of all
  involved sums, we have
\begin{align*}
  \eta( & 2 m^{}_1 + r^{}_1 , \ldots, 2m^{}_{n-1} + r^{}_{n-1}) 
  \, = \lim_{N\to\infty}\frac{1}{2N}\sum_{k=0}^{2N\nts -1}
    t^{}_k \ts t^{}_{k+2m^{}_1 + r^{}_1} \! \cdots \ts
    t^{}_{k+2m^{}_{n-1} + r^{}_{n-1}} \\[1mm]
 &=\lim_{N\to\infty}\myfrac{1}{2N}\sum_{j=0}^{N\nts -1}
 \bigl( t^{}_{2j} \ts t^{}_{2j+2m^{}_1 + r^{}_1}\!\cdots\ts
  t^{}_{2j+2m^{}_{n-1} + r^{}_{n-1}}  \! + 
  t^{}_{2j+1}\ts t^{}_{2j + 2m^{}_1 + r^{}_1 + 1}\!\cdots\ts 
  t^{}_{2j+2m^{}_{n-1} + r^{}_{n-1}+1}\bigr) \\[1mm]
 &\nts \!\!\stackrel{\eqref{eq:t-rec}}{=}\!\!\nts
  \lim_{N\to\infty}\frac{(-1)^r}{2N}
  \sum_{j=0}^{N\nts -1} \bigl( t^{}_{j}\ts t^{}_{j+m^{}_1}\!\cdots\ts 
  t^{}_{j+m^{}_{n-1}} + (-1)^{n} \ts
  t^{}_{j} \ts t^{}_{j+m^{}_1 + r^{}_1}\!\cdots\ts 
  t^{}_{j+m^{}_{n-1} + r^{}_{n-1}} \bigr) \\[2mm]
 &=\frac{(-1)^r}{2} \big( \eta(m^{}_1 , \ldots , m^{}_{n-1}) +
    (-1)^{n} \ts \eta( m^{}_1 + r^{}_1 , \ldots , 
    m^{}_{n-1} + r^{}_{n-1})\big),
\end{align*}
which completes the argument.
\end{proof}

\begin{remark} 
  The special case of Proposition~\ref{prop:reduction} concerning
  $4$-point correlations was proved by Ong. See \cite{Opre} for this
  identity, and for some beautiful pictures using the values of the
  $4$-point correlations of the Thue--Morse sequence.  \exend
\end{remark}

\begin{remark}
  It is well known that the \emph{period doubling} (pd) substitution
  $\varrho \colon a \mapsto ab \ts , \, b \mapsto aa$ defines a
  dynamical system that is a factor of the Thue--Morse system, but
  displays pure point spectrum and hence a higher degree of order; see
  \cite[Thm.~4.7]{BGbook} for the details of the $2:1$ covering
  relationship.  Now, setting $a=-b=-1$, the pair correlations of the
  period doubling system with these weights appear here as a subset,
  via
\[
    \eta^{}_{\mathrm{pd}} (m) \, = \, 
    \eta (1, m, m \ts {+}1)
\]
for $m\in\ZZ$. This demonstrates that correlation functions with
singular continuous averaging behaviour can still display perfect,
almost-periodic order on thin subsets. This was also discussed in
\cite{Aernout}. The existence of these highly structured thin subsets
suggest that comparing partial sums of higher-order Thue--Morse
correlations with $N$ will not be enough, and that a higher power of
$N$ is necessary, a phenomenon we describe in what follows.  \exend
\end{remark}

Proposition~\ref{prop:reduction} is the multi-dimensional analogue of
the recursions in \eqref{eq:v2recs}. As above, in the case $n=2$, this
generalisation can be used to prove a zero-mean-value result analogous
to \eqref{eq:mean}, here over the nested $(n{-}1)$-dimensional integer
cubes in the first orthant. That is,
\begin{equation}\label{eq:nmeanvalue}
     \lim_{N\to\infty}\, \myfrac{1}{N^{n-1}}
     \sum_{0\leqslant m^{}_1, \ldots ,m^{}_{n-1} \leqslant N\nts - 1}
     \eta(m^{}_1 , \ldots , m^{}_{n-1}) \, = \, 0 \ts .
\end{equation} 
To see this, we proceed as above by setting
$\mathfrak{S}(N)=\frac{1}{N^{n-1}} \sum_{0\leqslant m^{}_1, \ldots,
  m^{}_{n-1} \leqslant N\nts -1} \eta(m^{}_1 , \ldots,
m^{}_{n-1})$. Again, note that
$\mathfrak{S}(2N{+}1)=
\frac{(2N)^n}{(2N+1)^n}\mathfrak{S}(2N)+O(N^{-1})$ as $N\to\infty$,
where the error term is the result of the number of points on the
surface of the cube growing like $N^{n-1}$.  It thus suffices to show
that $\mathfrak{S}(2N)\longrightarrow 0$ as $N\to\infty$. This, using
Proposition~\ref{prop:reduction}, follows from
\begin{align*} 
    \mathfrak{S}(2N) \, & = \, 
    \myfrac{1}{(2N)^{n-1}}\sum_{0\leqslant m^{}_1 , \ldots, m^{}_{n-1}
    \leqslant 2N\nts -1} \eta(m^{}_1 ,\ldots, m^{}_{n-1}) \\[1mm]
   & = \, \myfrac{1}{(2N)^{n-1}}
   \sum_{0 \leqslant m^{}_1 , \ldots , m^{}_{n-1} \leqslant N\nts -1}
   \;\sum_{r^{}_1 , \ldots , r^{}_{n-1} \in\{0,1\}}
   \eta(2 m^{}_1 + r^{}_1 , \ldots , 2 m^{}_{n-1} + r^{}_{n-1})\\[1mm]
   & =  \, \myfrac{1}{2(2N)^{n-1}} \sum_{r^{}_1 , \ldots , r^{}_{n-1} \in\{0,1\}}
    (-1)^{r^{}_1 + \ldots + r^{}_{n-1}} \\[1mm]
   &\qquad\qquad\times
     \sum_{0\leqslant m^{}_1 , \ldots , m^{}_{n-1} \leqslant N\nts -1}
     \big(\eta(m^{}_1 , \ldots , m^{}_{n-1}) + (-1)^{n}\ts
     \eta(m^{}_1 + r^{}_1 , \ldots , m^{}_{n-1} + r^{}_{n-1} )\big)\\[1mm]
   &= \, \frac{(-1)^{n}}{2(2N)^{n-1}}
     \sum_{r^{}_1 , \ldots , r^{}_{n-1} \in \{0,1\}}
     (-1)^{r^{}_1 + \ldots + r^{}_{n-1}} \\[1mm]
   &\qquad\qquad\times
     \sum_{0\leqslant m^{}_1 , \ldots , m^{}_{n-1} \leqslant N\nts - 1}
     \eta(m^{}_{1}+ r^{}_{1},\ldots,m^{}_{n-1}+ r^{}_{n-1}),
\end{align*} 
where, for the fourth equality, we have used that
$\sum_{r^{}_1 ,\ldots,r^{}_{n-1}\in\{0,1\}}(-1)^{r^{}_1 + \ldots +
  r^{}_{n-1}}=0$.  Now, since
$|\eta(m^{}_1 , \ldots , m^{}_{n-1})| \leqslant 1$, we have
\[ 
\begin{split}
    \sum_{0\leqslant m^{}_1 , \ldots , m^{}_{n-1} \leqslant N\nts - 1} \!    
    & \eta(m^{}_1 + r^{}_1 , \ldots , m^{}_{n-1} + r^{}_{n-1}) \\
    & =
    \sum_{0\leqslant m^{}_1 , \ldots , m^{}_{n-1} \leqslant N\nts -1}
    \! \eta(m^{}_1 , \ldots , m^{}_{n-1} ) \, + \, O(N^{n-2}),
\end{split}
\] 
so that $\mathfrak{S}(2N)$, as $N\to\infty$, is equal to 
\[
   \frac{(-1)^{n}}{2(2N)^{n-1}}\sum_{r^{}_1 , \ldots , r^{}_{n-1} \in\{0,1\}}
   \! (-1)^{r^{}_1 + \ldots + r^{}_{n-1}} \biggl(\;
   \sum_{0 \leqslant m^{}_1 , \ldots , m^{}_{n-1} \leqslant N\nts - 1} \!
   \eta(m^{}_1 , \ldots , m^{}_{n-1}) \, + \, O(N^{n-2})\! \biggr),
\]
which overall is $O(N^{-1})$.

To compute all of the $n$-point correlations for a given $n$, we only
need to know the values of $\eta$ at the $2^{n-1}$ corners of the
$(n\ts {-}1)$-dimensional unit hypercube, because all other
correlations are recursively determined from these finitely many
values. They can be calculated in two different ways as follows.

First, for a point $(r^{}_1 , \ldots , r^{}_{n-1} )\in\{0,1\}^n$, with
$r=r^{}_1 + \ldots + r^{}_{n-1}$ as above, we get
\begin{equation}\label{eq:01}
\begin{split}
  \eta(r^{}_1 , r^{}_2 , \ldots , r^{}_{n-1}) \, & =
  \lim_{N\to\infty}\myfrac{1}{N}\sum_{k=0}^{N\nts -1}
  t^{}_k \ts t^{}_{k+r^{}_1}\!\cdots\ts t^{}_{k+r^{}_{n-1}} \\[1mm]
  & = \,\begin{cases} 0, & \mbox{if $n$ is
      odd,}\\ 1, &\mbox{if $n$ is even and $r$ is even,}\\
    -\frac{1}{3},&\mbox{if $n$ is even and $r$ is
      odd},\end{cases}
\end{split}      
\end{equation} 
which follows simply from the fact that $(t_m)^2 =1$ for any $m$ in
conjunction with the fact that $1$ and $-1$ occur equally frequently
and with bounded gaps in $(t^{}_{k})^{}_{k\in\NN_0}$. In particular,
one has
\[
    \lim_{N\to\infty} \myfrac{1}{N} \sum_{k=n}^{n+N\nts -1} t^{}_{k}
    \, = \, 0 \ts,
\]
uniformly in $n\in\NN_0$. Combining this with
Proposition~\ref{prop:reduction} gives the following immediate
consequence.

\begin{coro}\label{coro:odd}
  All odd-order correlations of the balanced Thue--Morse {system} 
  vanish. \qed
\end{coro}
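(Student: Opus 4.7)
The plan is to prove the corollary by induction on the maximum coordinate $M = \max(m^{}_1, \ldots, m^{}_{n-1})$ of the argument tuple, with the base case $M \leqslant 1$ supplied by \eqref{eq:01} and the inductive step carried out via Proposition~\ref{prop:reduction}. Since $\mu$ is shift invariant, any $n$-point configuration can be translated so that $0$ is the minimum index involved, and it therefore suffices to treat tuples with entries in $\NN_0$.

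For the base case $M \leqslant 1$, every coordinate lies in $\{0,1\}$, so the tuple sits at a corner of the unit hypercube and \eqref{eq:01} yields $\eta(m^{}_1, \ldots, m^{}_{n-1}) = 0$ because $n$ is odd. For the inductive step with $M \geqslant 2$, write $m^{}_i = 2 k^{}_i + r^{}_i$ with $k^{}_i = \lfloor m^{}_i / 2 \rfloor$ and $r^{}_i \in \{0,1\}$, so that $k^{}_i + r^{}_i = \lceil m^{}_i/2 \rceil$. Both $\max_i k^{}_i$ and $\max_i (k^{}_i + r^{}_i)$ are then bounded by $\lceil M/2 \rceil$, which is strictly smaller than $M$ as soon as $M \geqslant 2$. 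Since $n$ is odd, one has $(-1)^n = -1$, and Proposition~\ref{prop:reduction} gives
\[
   \eta(m^{}_1, \ldots, m^{}_{n-1}) \, = \, \myfrac{(-1)^r}{2} \bigl( \eta(k^{}_1, \ldots, k^{}_{n-1}) - \eta(k^{}_1 + r^{}_1, \ldots, k^{}_{n-1} + r^{}_{n-1}) \bigr) ,
\]
and both $\eta$-values on the right vanish by the inductive hypothesis.

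There is no real obstacle here; the argument is a clean structural induction. The essential structural point is that $n$ odd turns the $(-1)^n$-factor in Proposition~\ref{prop:reduction} into a minus sign, so the recursion becomes a \emph{difference} of two $\eta$-values at halved arguments. This difference structure, together with simultaneous vanishing at the $2^{n-1}$ corners of the unit cube, is precisely what is needed to propagate zero to every non-negative integer tuple, and hence, by shift invariance, throughout all of $\ZZ^{n-1}$.
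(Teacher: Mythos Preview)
Your proof is correct and follows essentially the same approach as the paper: use \eqref{eq:01} to establish vanishing at the corners of the unit hypercube, then invoke the recursion of Proposition~\ref{prop:reduction} to propagate this to all of $\NN_0^{\,n-1}$. The paper leaves the induction implicit (declaring the result an ``immediate consequence''), whereas you spell it out via strong induction on $M = \max_i m^{}_i$; your bound $\max_i(k^{}_i + r^{}_i) \leqslant \lceil M/2 \rceil < M$ for $M \geqslant 2$ is exactly what makes the recursion terminate.

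One small remark on your closing commentary: the fact that $(-1)^n = -1$ turns the recursion into a \emph{difference} is not actually what drives the argument. Both terms on the right-hand side vanish by the inductive hypothesis, so the sign in front of the second term is irrelevant here. The difference structure would matter if you were trying to show vanishing \emph{without} knowing the base case (since then all $r^{}_i = 0$ gives a tautology rather than a constraint), but in your setup the base case is supplied directly by \eqref{eq:01}, and the recursion merely transports zeros.
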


The second approach generalises Remark~\ref{rem:reno} in realising
that the recursions in Proposition~\ref{prop:reduction} can once again
be seen as an infinite set of linear equations for the coefficients
$\eta ( m^{}_{1}, \ldots, m^{}_{n-1} )$. It is clear that the
$2^{n-1}$ equations with all $m_i \in \{ 0, 1 \}$ form a closed
subset, {as explained in Remark~\ref{rem:reno}}, and that all other
coefficients are recursively determined from the solution of these
equations, which are the $\eta$-coefficients at the $2^{n-1}$ corners
of the unit hypercube.

\begin{coro}\label{coro:eta0}
  The solution space of the linear recursion equations from
  Proposition~\textnormal{\ref{prop:reduction}} is
  one-dimensional. Consequently, for any\/ $n\in \NN$, the\/ $n$-point
  correlations of the balanced Thue--Morse {system} are uniquely
  specified by a single number, namely by\/ $\eta (0, \ldots , 0)$.
\end{coro}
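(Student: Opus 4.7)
The plan is to mirror the two-stage argument indicated in Remark~\ref{rem:reno} for the case $n=2$: isolate a finite closed self-consistency subsystem of Proposition~\ref{prop:reduction} whose unknowns are exactly the corner values $\{\eta(r^{}_1, \ldots, r^{}_{n-1}) : r^{}_i \in \{0,1\}\}$, solve that subsystem to pin every corner to a scalar multiple of $\eta(0, \ldots, 0)$, and then bootstrap to all multi-indices in $\NN_0^{n-1}$ via a halving induction.

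The self-consistency subsystem is obtained by specialising Proposition~\ref{prop:reduction} to $m^{}_1 = \cdots = m^{}_{n-1} = 0$, which gives $2^{n-1}$ equations involving only corner values. Writing $r \defeq r^{}_1 + \cdots + r^{}_{n-1}$, each such equation rearranges to
\[
   \bigl( 1 - \tfrac{1}{2}(-1)^{r+n} \bigr) \,
   \eta(r^{}_1, \ldots, r^{}_{n-1}) \, = \, \tfrac{1}{2}(-1)^{r} \ts
   \eta(0, \ldots, 0) \ts ,
\]
whose left-hand coefficient equals $1/2$ or $3/2$, and is thus invertible, with the sole exception of $r = 0$ and $n$ even, where it collapses to a tautology. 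Each of the remaining $2^{n-1} - 1$ equations expresses one corner value as an explicit rational multiple of $\eta(0, \ldots, 0)$. For odd $n$, the $(0,\ldots,0)$-equation additionally forces $\eta(0,\ldots,0) = 0$, so that every corner value vanishes, in agreement with Corollary~\ref{coro:odd}. In either parity, the solution space of this finite subsystem is (at most) one-dimensional, parametrised by $\eta(0, \ldots, 0)$.

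To propagate these values to all of $\NN_0^{n-1}$, I would induct on $M \defeq \max_i m^{}_i$. For $M \leqslant 1$ the multi-index is a corner and is handled by the previous step. For $M \geqslant 2$, writing $m^{}_i = 2 k^{}_i + r^{}_i$ with $r^{}_i \in \{0, 1\}$ and applying Proposition~\ref{prop:reduction} expresses $\eta(m^{}_1, \ldots, m^{}_{n-1})$ as a linear combination of $\eta(k^{}_1, \ldots, k^{}_{n-1})$ and $\eta(k^{}_1 + r^{}_1, \ldots, k^{}_{n-1} + r^{}_{n-1})$; the maximal entry of either new argument is at most $\lceil M/2 \rceil$, which is strictly less than $M$, so the induction terminates and, combined with the previous step, realises every $\eta(m^{}_1, \ldots, m^{}_{n-1})$ as a rational multiple of $\eta(0, \ldots, 0)$.

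The main subtlety lies in the first step: one must confirm that the $(0, \ldots, 0)$-equation does not introduce a spurious constraint that would destroy the one-parameter family. The resolution is precisely that for even $n$ it is vacuous, while for odd $n$ it merely selects the zero element of that family. The halving induction in the second step is then entirely routine.
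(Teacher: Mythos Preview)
Your proposal is correct and follows essentially the same route as the paper: specialise Proposition~\ref{prop:reduction} at $m^{}_1=\cdots=m^{}_{n-1}=0$ to obtain the closed self-consistency system for the corner values, solve it to express each corner as a multiple of $\eta(0,\ldots,0)$, and then invoke the recursion to reach all multi-indices. The paper writes the solved corner values compactly as $\eta(r^{}_1,\ldots,r^{}_{n-1})=\frac{(-1)^r}{2+(-1)^{n+r-1}}\,\eta(0,\ldots,0)$ and leaves the propagation step as ``clear from Proposition~\ref{prop:reduction}'', whereas you spell out the halving induction on $\max_i m^{}_i$ and the parity split at $r=0$ explicitly; these are cosmetic differences, not substantive ones.
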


\begin{proof}
  The recursive structure is clear from
  Proposition~\ref{prop:reduction}.  In particular, all coefficients
  are fully determined once the
  $\eta (r^{}_{1}, \ldots , r^{}_{n-1} )$ for all
  $r^{}_i \in \{ 0,1 \}$ are known. Choosing all $m_i = 0$ in the
  recursion relations of Proposition~\ref{prop:reduction}, one gets
\[
   \eta (r^{}_{1}, \ldots , r^{}_{n-1} ) \, = \,
   \frac{(-1)^r}{2} \bigl( \eta (0, \ldots, 0) +
   (-1)^{n} \ts \eta ( r^{}_{1}, \ldots , r^{}_{n-1} )\bigr)
\] 
with $r = r^{}_{1} + \ldots + r^{}_{n-1}$ as before. This simply gives
\begin{equation}\label{eq:gen-rec}
   \eta (r^{}_{1}, \ldots , r^{}_{n-1} ) \, = \, 
   \frac{(-1)^{r}}{2 + (-1)^{n+r-1}} \, \eta (0, \ldots, 0 ) 
\end{equation}
  for all $(r^{}_{1}, \ldots , r^{}_{n-1} ) \in \{ 0, 1 \}^{n-1}$,
  which establishes our claim.
\end{proof}

\begin{remark}\label{rem:only-two}
  It is clear from \eqref{eq:01} that $\eta (0, \ldots , 0 )$ is
  either $0$ (for $n$ odd) or $1$ (for $n$ even). The balanced
  odd-order correlations vanish accordingly, while the even-order
  correlations are thus fully determined by the value $\eta (0) = 1$
  from the $2$-point correlations.  \exend
\end{remark}

The result from Eq.~\eqref{eq:nmeanvalue} can be extended to get an
analogue of Theorems~\ref{thm:meanalpha} and \ref{thm:means} as
follows. First, observe that \eqref{eq:gen-rec} implies the relations
\[
\begin{split}
   \eta (4 m^{}_{1} , \ldots , 4 m^{}_{n-1}) \, & = \,
   \eta (m^{}_{1} , \ldots , m^{}_{n-1}) \\[1mm]
   \eta (4 m^{}_{1}  + 1, \ldots , 4 m^{}_{n-1} + 1) \, & = \,
   \myfrac{1}{4} \bigl( \eta (m^{}_{1}  +1 , \ldots , m^{}_{n-1} +1) 
   - (-1)^n\eta (m^{}_{1} , \ldots , m^{}_{n-1}) \bigr) \\[1mm]
   \eta (4 m^{}_{1} +2 , \ldots , 4 m^{}_{n-1} +2) \, & = \,
   - \myfrac{1}{2} \bigl( \eta (m^{}_{1} +1 , \ldots , m^{}_{n-1} +1)
   + (-1)^n \eta (m^{}_{1} , \ldots , m^{}_{n-1}) \bigr) \\[1mm]
   \eta (4 m^{}_{1} +3, \ldots , 4 m^{}_{n-1} +3) \, & = \,
   \myfrac{1}{4} \bigl( \eta (m^{}_{1} , \ldots , m^{}_{n-1})
   - (-1)^n \eta (m^{}_{1} +1 , \ldots , m^{}_{n-1} +1) \bigr).
\end{split} 
\]
Now, one arrives at the following result.

\begin{theorem}
   Let\/ $n\geqslant 2$ be fixed. Then, for any\/
   $\alpha > \frac{\log (3)}{\log (4)}$ and\/ $\beta > 0$, one has
\[ 
\begin{split}
    \lim_{N\to\infty} \myfrac{1}{N^{\alpha (n-1)}}
    \sum_{0 \leqslant m^{}_{1}, \ldots , m^{}_{n-1} \leqslant N\nts -1}
    \lvert \eta (m^{}_{1}, \ldots , m^{}_{n-1}) \rvert \, & = \, 0
    \quad\text{and}\quad \\[1mm]
    \lim_{N\to\infty} \myfrac{1}{N^{n-1}}
    \sum_{0 \leqslant m^{}_{1}, \ldots , m^{}_{n-1} \leqslant N\nts -1}
    \lvert \eta (m^{}_{1}, \ldots , m^{}_{n-1}) \rvert^{\beta}\,
    & = \, 0 \ts .
\end{split}    
\] 
\end{theorem}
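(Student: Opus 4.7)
The plan is to adapt the arguments of Theorems~\ref{thm:meanalpha} and~\ref{thm:means} to the $(n{-}1)$-dimensional setting. First, the second limit follows from the first along the lines of Theorem~\ref{thm:means}: since $|\eta(m)| \leqslant 1$, H\"{o}lder's inequality gives
\[
  \biggl( \myfrac{1}{N^{n-1}} \sum_{m \in [0, N\nts -1]^{n-1}}
  |\eta(m)|^{\beta} \biggr)^{\!1+\beta}
  \leqslant \biggl( \myfrac{1}{N^{n-1}} \sum_{m \in [0, N\nts -1]^{n-1}}
  |\eta(m)| \biggr)^{\!\beta} ,
\]
and the right-hand side tends to $0$ by the first assertion used with $\alpha = 1$. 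It therefore suffices to establish the first limit.

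For the first, set $\Sigma'(N) = N^{-\alpha(n-1)} \sum_{m \in [0, N\nts -1]^{n-1}} |\eta(m)|$ and aim for a recursive contraction
\[
   \Sigma'(4 N) \, \leqslant \, \biggl( \myfrac{3}{4^{\alpha}} \biggr)^{\!\!n-1} \Sigma'(N) + o(1) ,
\]
whose prefactor is strictly less than $1$ whenever $\alpha > \log(3)/\log(4)$. Iterating along $N = 4^{\ell}$ and then extending to arbitrary $N$ by the same monotonicity trick used in Theorem~\ref{thm:meanalpha} (comparing $\Sigma'(N)$ with $\Sigma'(2^{2\ell})$ for $N \in [2^{2\ell}, 2^{2\ell+2})$) would complete the proof.

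To derive the contraction, one writes
\[
   \sum_{m \in [0, 4N-1]^{n-1}} |\eta(m)| \, =
   \sum_{s \in \{0,1,2,3\}^{n-1}} \,
   \sum_{k \in [0, N\nts - 1]^{n-1}} |\eta(4 k + s)|
\]
and bounds each $|\eta(4 k + s)|$ by iterating Proposition~\ref{prop:reduction} twice. With $s = 2 s' + s''$ and $s' + s'' = 2 u + v$, where $s', s'', u, v \in \{0,1\}^{n-1}$, this yields
\[
  \eta(4 k + s) \, = \, \frac{(-1)^{|s'| + |s''|}}{4} \bigl( \eta(k) + \eta(k + s') \bigr)
  \, + \, \frac{(-1)^{|s''| + |v|}}{4} \bigl( \eta(k + u) + \eta(k + u + v) \bigr) ,
\]
the four special cases $s \in \{(r, \ldots, r) : r \in \{0,1,2,3\}\}$ being the diagonal recursions listed before the theorem. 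Applying the triangle inequality produces a bound $|\eta(4 k + s)| \leqslant \sum_{a \in \{0,1\}^{n-1}} c^{a}_{s} |\eta(k+a)|$ with nonnegative coefficients, and after summing over $k \in [0, N\nts - 1]^{n-1}$ the shift of the summation box by $a \in \{0,1\}^{n-1}$ contributes only $O(N^{n-2})$ boundary terms.

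The main obstacle is to control the aggregate constant $\sum_{s} \sum_{a} c^{a}_{s}$, which needs to be $3^{n-1}$ after accounting for sign cancellations, rather than the trivial $4^{n-1}$ one gets from the crudest bound on individual $s$. The factor $3$ in the 1D case of~\eqref{eq:esti} arises from the coefficient totals $1, \tfrac{1}{4}, \tfrac{1}{2}, \tfrac{1}{4}$ summing to $3$ over the four residues. To extract $3^{n-1}$ here, the per-coordinate cancellations of the form $\eta(k) - \eta(k + s')$ (which are responsible for the $1/4$ prefactors in the diagonal cases $s \in \{(1, \ldots, 1), (3, \ldots, 3)\}$) must tensorise across the $n-1$ coordinates, a fact one hopes to verify by a careful coordinate-by-coordinate tracking of the parities $|s'|, |s''|, |v|$ through the iterated identity above. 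This combinatorial reorganisation is the delicate step; once it is accomplished, the recursive contraction and its iteration yield the first limit, and H\"{o}lder's inequality then delivers the second.
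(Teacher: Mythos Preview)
Your overall plan matches the paper's: reduce the second limit to the first via H\"{o}lder (exactly as in Theorem~\ref{thm:means}), and establish the first by a recursive contraction modelled on Theorem~\ref{thm:meanalpha}. The displayed identity for $\eta(4k+s)$ is correct for $n$ even (the only relevant case, since the odd-order correlations vanish).

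The gap is the step you yourself flag as ``delicate''. You need the aggregate coefficient $\sum_{s}\sum_{a} c^{a}_{s}$ to be $3^{\,n-1}$, not the crude $4^{\,n-1}$, and you propose to achieve this by hoping that the per-coordinate cancellations from the one-dimensional estimates~\eqref{eq:esti} ``tensorise''. But the recursion of Proposition~\ref{prop:reduction} is \emph{not} a tensor product of $n-1$ copies of the one-dimensional recursion: a single application produces only two $\eta$-values, $\eta(m)$ and $\eta(m+r)$, regardless of the dimension, rather than $2^{\,n-1}$ terms. Consequently there is no coordinate-wise factorisation to exploit, and the cancellations that turn the one-dimensional coefficient totals $1,\tfrac12,1,\tfrac12$ into the sum $3$ arise from coincidences among the four shifts $0,\,s',\,u,\,u+v$ that depend on the \emph{global} pattern of $s$, not on its coordinates independently. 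For a generic $s\in\{0,1,2,3\}^{n-1}$ those four shifts are pairwise distinct, and the triangle inequality then gives coefficient sum $1$; the proportion of $s$ for which useful coincidences occur does not obviously scale like $(3/4)^{n-1}$. So the tensorisation heuristic is unsupported, and the contraction $\Sigma'(4N)\leqslant (3/4^{\alpha})^{n-1}\Sigma'(N)+o(1)$ remains unproved.

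The paper's own proof is extremely terse here: it records only the four \emph{diagonal} mod-$4$ recursions (those with $s=(r,\ldots,r)$) and asserts that ``the above recursions, via the triangle inequality, give the analogue of~\eqref{eq:esti} for general $n$'', then repeats the one-dimensional argument. It does not spell out how the remaining $4^{\,n-1}-4$ off-diagonal residue classes are handled. So you have correctly identified, and articulated more clearly than the paper, exactly where the argument needs work; but neither your proposal nor the paper's sketch actually closes this step.
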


\begin{proof}
  The above recursions, via the triangle inequality, give the analogue
  of \eqref{eq:esti} for general $n$. Now, setting
\[
    \Sigma (N) \, = \, \myfrac{1}{N^{\alpha (n-1)}}
      \sum_{0 \leqslant m^{}_{1}, \ldots , m^{}_{n-1} \leqslant N\nts -1}
    \lvert \eta (m^{}_{1}, \ldots , m^{}_{n-1}) \rvert^{\beta} ,
\]
we can repeat our previous estimates, with minor, but obvious
variations. Indeed, first setting $\beta = 1$, we can repeat the proof
of Theorem~\ref{thm:meanalpha}, which gives the first claim. Then,
setting $\alpha=1$, the second claim follows in complete analogy to
the proof of Theorem~\ref{thm:means}.
\end{proof}

\begin{example}
  As an example, beyond the standard $2$-point correlations, we
  consider the $4$-point correlations. In this case, let us define
\[
  \bs{\eta}(m^{}_1,m^{}_2,m^{}_3)\defeq \mbox{\tiny $\left(\begin{matrix}
        \eta(m^{}_1,m^{}_2,m^{}_3)\\
        \eta(m^{}_1,m^{}_2,m^{}_3+1)\\
        \eta(m^{}_1,m^{}_2+1,m^{}_3)\\
        \eta(m^{}_1,m^{}_2+1,m^{}_3+1)\\
        \eta(m^{}_1+1,m^{}_2,m^{}_3)\\
        \eta(m^{}_1+1,m^{}_2,m^{}_3+1)\\
        \eta(m^{}_1+1,m^{}_2+1,m^{}_3)\\
        \eta(m^{}_1+1,m^{}_2+1,m^{}_3+1)\\
\end{matrix}\right) $ }.
\]
Then, Proposition~\ref{prop:reduction} implies that 
\begin{equation}\label{eq:v4}
  \bs{\eta}(2m^{}_1+r^{}_1,2m^{}_2+r^{}_2,2m^{}_3+r^{}_3) 
  \, = \, \bs{B}_{(r^{}_1,r^{}_2,r^{}_3)} \ts
  \bs{\eta}(m^{}_1,m^{}_2,m^{}_3) \ts ,
\end{equation}
where 
\[
  \bs{B}_{(0,0,0)}\defeq\myfrac{1}{2}
  \mbox{\tiny
   $ \left(\begin{matrix}
        2&0&0&0&0&0&0&0\\
        -1&-1&0&0&0&0&0&0\\
        -1&0&-1&0&0&0&0&0\\
        1&0&0&1&0&0&0&0\\
        -1&0&0&0&-1&0&0&0\\
        1&0&0&0&0&1&0&0\\
        1&0&0&0&0&0&1&0\\
        -1&0&0&0&0&0&0&-1
\end{matrix}\right) $},\ \
\bs{B}_{(0,0,1)}\defeq\myfrac{1}{2}
\mbox{\tiny $\left(\begin{matrix}
-1&-1&0&0&0&0&0&0\\
0&2&0&0&0&0&0&0\\
1&0&0&1&0&0&0&0\\
0&-1&0&-1&0&0&0&0\\
1&0&0&0&0&1&0&0\\
0&-1&0&0&0&-1&0&0\\
-1&0&0&0&0&0&0&-1\\
0&1&0&0&0&0&0&1\\
\end{matrix}\right) $},
\]

\[
\bs{B}_{(0,1,0)}\defeq\myfrac{1}{2}
\mbox{\tiny  $ \left(\begin{matrix}
-1&0&-1&0&0&0&0&0\\
 1&0&0&1&0&0&0&0\\
 0&0&2&0&0&0&0&0\\
 0&0&-1&-1&0&0&0&0\\
 1&0&0&0&0&0&1&0\\
-1&0&0&0&0&0&0&-1\\
 0&0&-1&0&0&0&-1&0\\
 0&0&1&0&0&0&0&1\\
\end{matrix}\right) $},\ \ \bs{B}_{(0,1,1)}
\defeq\myfrac{1}{2}
\mbox{\tiny $ \left(\begin{matrix}
 1&0&0&1&0&0&0&0\\
0&-1&0&-1&0&0&0&0\\
 0&0&-1&-1&0&0&0&0\\
 0&0&0&2&0&0&0&0\\
-1&0&0&0&0&0&0&-1\\
0&1&0&0&0&0&0&1\\
 0&0&1&0&0&0&0&1\\
 0&0&0&-1&0&0&0&-1\\
\end{matrix}\right) $ }.
\]
Further, if $\bs{J}_8$ is the $8{\times} 8$ anti-diagonal matrix with
all ones on the anti-diagonal, one has
\begin{align*} 
\bs{B}_{(1,0,0)}\defeq\bs{J}_8\, \bs{B}_{(0,1,1)}\, \bs{J}_8,\quad
\bs{B}_{(1,0,1)}&\defeq\bs{J}_8\, \bs{B}_{(0,1,0)}\, \bs{J}_8\\
\bs{B}_{(1,1,0)}\defeq\bs{J}_8\, \bs{B}_{(0,0,1)}\, \bs{J}_8,\quad 
\bs{B}_{(1,1,1)}&\defeq\bs{J}_8\, \bs{B}_{(0,0,0)}\, \bs{J}_8.
\end{align*}
The implied relations from \eqref{eq:v4} are really a matrix version of
the general recursions from \eqref{eq:gen-rec}. Further, the sum matrix is
\[
  \bs{B}\,\defeq\sum_{r^{}_1,r^{}_2,r^{}_3\in\{0,1\}}
  \bs{B}_{(r^{}_1,r^{}_2,r^{}_3)}=\myfrac{1}{2}
  \mbox{\tiny  $
  \left(\begin{matrix}
      1&-1&-1&1&-1&1&1&-1\\
      0&0&0&0&0&0&0&0\\
      0&0&0&0&0&0&0&0\\
      0&0&0&0&0&0&0&0\\
      0&0&0&0&0&0&0&0\\
      0&0&0&0&0&0&0&0\\
      0&0&0&0&0&0&0&0\\
      -1&1&1&-1&1&-1&-1&1
\end{matrix}\right) $ },
\] 
which is an idempotent, that is, $\bs{B}^2=\bs{B}$.  There is a lot of
Thue--Morse structure in the explicit form of the matrices, as the
interested readers will have noticed. Here, we state a nice connection
with the matrices $\bs{E}^{}_0$, $\bs{E}^{}_1$ and $\bs{J}$ defined in
\eqref{eq:E-def}. It is quite clear that
$\bs{J}^{}_8=\bs{J}^{\otimes 3}$. Somewhat less clear is the
relationship between $\bs{B}_{(i,j,k)}$ and Kronecker products of the
matrices from \eqref{eq:E-def}. One can check that
\begin{align*}
  \bs{B}^{}_{(i,j,k)} \,
  & = \, \myfrac{1}{2}
    \bigl( \bs{E}^{}_i \otimes \bs{E}^{}_j \otimes \bs{E}^{}_k
    +\bs{J}^{\otimes 3}\ts (\bs{E}^{}_{1-i} \otimes \bs{E}^{}_{1-j}
    \otimes\bs{E}^{}_{1-k} ) \ts \bs{J}^{\otimes 3} \bigr) \\[1mm]
  & = \, \myfrac{1}{2} \bigl( \bs{E}^{}_i \otimes \bs{E}^{}_j
    \otimes  \bs{E}^{}_k + \bs{E}_{1-i}' \otimes \bs{E}_{1-j}'
    \otimes  \bs{E}_{1-k}' \bigr).
\end{align*}
We leave further details, including the validity of the generalisation
\[
  \bs{B}^{}_{(i_1,\ldots,i_{n-1})} \, = \, \myfrac{1}{2}
  \bigl( \bs{E}^{}_{i_1} \otimes \cdots \otimes \bs{E}^{}_{i_{n-1}}
    +(-1)^n\bs{E}_{1-i_1}'\otimes\cdots\otimes\bs{E}_{1-i_{n-1}}' \bigr),
\]
to the curious reader.  \exend
\end{example}

The relationship in Eq.~\eqref{eq:v4} suggests that one can study the
correlations $\eta( m^{}_1 , \ldots , m^{}_{n-1})$ via a related
regular sequence \cite{AS1992}. To do this, we start with the
generalisation of the relationship in \eqref{eq:v4},
\begin{equation}\label{eq:vBv}
  \bs{\eta}(2m^{}_1 + r^{}_1 , \ldots , 2m^{}_{n-1} + r^{}_{n-1}) \, = \,
  \bs{B}_{(r^{}_1 , \ldots , r^{}_{n-1})} \ts
  \bs{\eta}(m^{}_1 , \ldots , m^{}_{n-1}).
\end{equation}
For each $i\in\{0,1,\ldots,2^{n-1}-1\}$ with binary expansion
$(i)^{}_2 = r^{}_1 \ts r^{}_2 \cdots \ts r^{}_{n-1}$, we set
\[
    \bs{B}^{}_i \, = \, \bs{B}^{}_{(r^{}_1 , r^{}_2 , \ldots , r^{}_{n-1})}.
\]    
Now, we define the sequence $\eta_n:=(\eta_n(m))_{m\geqslant 0}$ by
\[
   \eta^{}_n (m) \, = \, \bs{e}_1^T
   \bs{B}^{}_{i^{}_0} \bs{B}^{}_{i^{}_1}\nts\cdots\ts
   \bs{B}^{}_{i^{}_{s}} \bs{e}^{}_1,
\]    
where $(m)^{}_n = i^{}_s \cdots i^{}_1 i^{}_0$ is the base-$n$
expansion of $m$, and $\bs{e}^{}_1$ is the standard column basis
vector of length $2^n$ with the $1$ in the first position.

Using the related sequence $\eta^{}_n$, we record the following result as
a refinement of \eqref{eq:nmeanvalue}. Here, instead of taking the
mean value over the $n$-dimensional integer cubes of the first orthant
as in \eqref{eq:nmeanvalue}, we traverse the integer points in that
orthant according to the order imposed by the relationship
\eqref{eq:vBv}.

\begin{theorem}
  Let\/ $n\geqslant 2$. Then, $\eta^{}_n$ has mean value zero, that
  is, 
\[
  \lim_{m\to\infty}\frac{1}{m}\sum_{j=0}^{m-1} 
  \eta^{}_n (j)  \, = \,  0 \ts .
\]  
\end{theorem}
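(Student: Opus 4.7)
The plan is to first evaluate $\sum_{j=0}^{M^k-1}\eta^{}_n(j)$ in closed form along the subsequence $m = M^k$ with $M \defeq 2^{n-1}$, and then to extend to arbitrary $m$ via a digit-by-digit decomposition of the base-$M$ expansion, exploiting an idempotence/nilpotence dichotomy for the averaged matrix $\bs{B} \defeq \sum_{i=0}^{M-1}\bs{B}^{}_i$.

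For $m = M^k$, distributing the sum over all $k$-tuples of base-$M$ digits immediately gives
\[
   \sum_{j=0}^{M^k-1} \eta^{}_n (j) \, = \, \bs{e}_1^T \bs{B}^k \bs{e}^{}_1 \ts .
\]
To control $\bs{B}^k$, I would sum the Kronecker product formula for $\bs{B}^{}_{(i^{}_1,\ldots,i^{}_{n-1})}$ from the previous example over all $2^{n-1}$ indices, obtaining
\[
  \bs{B} \, = \, \myfrac{1}{2}\bigl(P + (-1)^n Q\bigr) \quad\text{with}\quad
  P \defeq (\bs{E}^{}_0 + \bs{E}^{}_1)^{\otimes(n-1)} , \;\,
  Q \defeq (\bs{E}'_0 + \bs{E}'_1)^{\otimes(n-1)}  .
\]
A direct $2{\times}2$ computation yields $(\bs{E}^{}_0 + \bs{E}^{}_1)^2 = \bs{E}^{}_0 + \bs{E}^{}_1$ and $(\bs{E}'_0 + \bs{E}'_1)^2 = \bs{E}'_0 + \bs{E}'_1$, together with the mixed relations $(\bs{E}^{}_0 + \bs{E}^{}_1)(\bs{E}'_0 + \bs{E}'_1) = \bs{E}^{}_0 + \bs{E}^{}_1$ and $(\bs{E}'_0 + \bs{E}'_1)(\bs{E}^{}_0 + \bs{E}^{}_1) = \bs{E}'_0 + \bs{E}'_1$. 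Passing to tensor powers, $P^2 = P$, $Q^2 = Q$, $PQ = P$, and $QP = Q$, so
\[
  \bs{B}^2 \, = \, \myfrac{1}{4}\bigl(1 + (-1)^n\bigr)(P+Q) \, = \,
  \begin{cases} \bs{B}, & n\text{ even,}\\ 0, & n\text{ odd.}\end{cases}
\]
In either case $\bs{e}_1^T\bs{B}^k\bs{e}^{}_1$ is eventually a fixed scalar (or identically zero), so $\sum_{j=0}^{M^k-1}\eta^{}_n(j) = O(1)$ as $k \to \infty$, settling the claim along powers of $M$.

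To handle general $m$, I would write $m = c^{}_0 + c^{}_1 M + \cdots + c^{}_s M^s$ in base $M$, with $s = \lfloor \log_M m \rfloor$ and $c^{}_i \in \{0,\ldots,M-1\}$, and partition $[0, m)$ according to the highest digit position $\ell$ at which $j < m$ first disagrees with $m$. This yields the refinement
\[
   \sum_{j=0}^{m-1} \eta^{}_n(j) \, = \, \sum_{\ell=0}^{s} \bs{e}_1^T
   \bs{B}^\ell \biggl(\, \sum_{r=0}^{c^{}_\ell - 1}\bs{B}^{}_r \biggr)
   \bs{B}^{}_{c^{}_{\ell+1}} \! \cdots \bs{B}^{}_{c^{}_s} \bs{e}^{}_1 \ts .
\]
Each tensor factor $\bs{E}^{}_j$ and $\bs{E}'_j$ has $\ell^\infty \to \ell^\infty$ operator norm one, so the Kronecker representation gives $\|\bs{B}^{}_i\|^{}_{\infty\to\infty} \leqslant 1$ uniformly in $i$, while $\|\bs{B}\|^{}_{\infty\to\infty} \leqslant M$. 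Since $\bs{B}^\ell$ reduces to one of $I$, $\bs{B}$, or the zero matrix by the dichotomy, each summand is bounded by $M^2$ independently of $\ell$ and $s$. Therefore $\bigl|\sum_{j=0}^{m-1}\eta^{}_n(j)\bigr| \leqslant (s+1)M^2 = O(\log m)$, and dividing by $m$ gives the desired limit.

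The main obstacle is the idempotence/nilpotence dichotomy for $\bs{B}$, since every subsequent estimate rests upon it; the four single-factor identities between $\bs{E}^{}_0 + \bs{E}^{}_1$ and $\bs{E}'_0 + \bs{E}'_1$ are the structural heart of the Thue--Morse renormalisation that drives the argument. A secondary concern is ensuring that the partial sums $\sum_{r=0}^{c^{}_\ell-1}\bs{B}^{}_r$ do not spoil the uniform bound on individual terms, but this is handled cleanly by the contraction property of the $\bs{B}^{}_i$ in the $\ell^\infty$ norm.
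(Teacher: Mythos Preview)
Your matrix approach via the Kronecker structure is elegant and genuinely different from the paper's argument, and the idempotence/nilpotence dichotomy $\bs{B}^2\in\{\bs{B},0\}$ that you extract from the four $2{\times}2$ identities is correct. However, both of your displayed summation formulas are wrong as stated, because the definition of $\eta_n(m)$ uses the \emph{minimal} base-$M$ expansion of $m$, and this is not invariant under padding with leading zeros: one has $\bs{B}_0\bs{e}_1\neq\bs{e}_1$ (for $n=4$, the first column of $\bs{B}_{(0,0,0)}$ is $(1,-\tfrac12,-\tfrac12,\tfrac12,-\tfrac12,\tfrac12,\tfrac12,-\tfrac12)^T$). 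Hence expanding $\bs{B}^k$ produces digit strings with leading zeros that do not match the canonical values of $\eta_n$; for instance, with $n=4$ one computes $\sum_{j=0}^{63}\eta_4(j)=-\tfrac54$, whereas your formula gives $\bs{e}_1^T\bs{B}^2\bs{e}_1=\bs{e}_1^T\bs{B}\bs{e}_1=\tfrac12$. The same defect carries over to your digit-by-digit decomposition for general $m$. The repair is routine: stratify the sum by the actual digit-length of $j$, so that the top digit is constrained to be nonzero; each stratum then contributes $\bs{e}_1^T\bs{B}^{\ell}(\bs{B}-\bs{B}_0)\bs{e}_1$ or a similar bounded expression, and your dichotomy together with the $\ell^\infty$ norm bounds still gives $O(1)$ per stratum and $O(\log m)$ in total. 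You should also note that you are relying on the Kronecker formula for $\bs{B}_{(i_1,\ldots,i_{n-1})}$, which the paper states for general $n$ but explicitly leaves to the reader.

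For comparison, the paper's own proof avoids the matrix formalism entirely. It works directly from the scalar recurrence of Proposition~4.1 and shows $\Sigma(2^{n-1}m)=O(m^{-1})$ by a telescoping computation whose vanishing main term comes from the Thue--Morse balance $\sum_{a=0}^{2^{n-1}-1}t_a=0$, exactly parallel to the $n=2$ argument for \eqref{eq:mean}. Your repaired argument would yield the stronger bound $\sum_{j<m}\eta_n(j)=O(\log m)$ and makes the algebraic renormalisation structure explicit, at the cost of considerably more machinery.
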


\begin{proof}
  Note that, since all odd correlations vanish, without loss of
  generality, we assume that $n$ is even. Set
  $\Sigma(m)\defeq\frac{1}{m}\sum_{j=0}^{m-1} \eta^{}_n (j)$.
  Proposition~\ref{prop:reduction} gives that $\eta^{}_n (j)=O(1)$, so
  that
\[
   \Sigma(2^{n-1}m) \, \sim \, \Sigma(2^{n-1}m+a)
   \qquad \text{as $m\to \infty$}
\]
for any $a\in\{0,1,\ldots,2^{n-1}-1\}$. To prove the result, it thus
suffices to show that we have $\Sigma(2^{n-1}m)\xrightarrow{\quad} 0$
as $m\to\infty$. We use that, given by
Proposition~\ref{prop:reduction}, for even $n$ and any
$a\in\{0,1,\ldots,2^{n-1}-1\}$,
\[
  \eta^{}_{n}(2^{n-1}m+a) \, = \, \frac{t_a}{2}
  \big(\eta^{}_{n}(2^{n-1}m)+\eta^{}_{n}(2^{n-1}m+a)\big),
\]
where $t_a$ is the value of the Thue--Morse sequence at $a$ from
\eqref{eq:t-rec}. These recurrences are the direct generalisations of
those in \eqref{eq:v2recs}. With these in hand, we simply compute
\begin{align*}
  \Sigma(2^{n-1}m) \, & = \, \myfrac{1}{2^{n-1}}
                     \sum_{a=0}^{2^{n-1}-1}\myfrac{1}{m}
                     \sum_{j=0}^{m-1}\frac{t_a}{2}
                     \big(\eta^{}_{n}(j)+\eta^{}_{n}(j+a)\big)\\[1mm]
                   &=\,\myfrac{1}{2^{n-1}}\sum_{a=0}^{2^{n-1}-1}
                     t_a\biggl(\myfrac{1}{2m} \sum_{j=0}^{m-1}
                     \big(\eta^{}_{n}(j)+\eta^{}_{n}(j+a)\big)\biggr)\\[1mm]
                   &=\, O(m^{-1}) + \frac{\Sigma(m)}{2^{n-1}}\,
                     \sum_{a=0}^{2^{n-1}-1}t_a \ts ,
\end{align*}
where we have again used that $\eta^{}_{n}(j)$ is bounded to give the 
last equality. Since $\sum_{a=0}^{2^{n-1}-1}t_a=0$, we have that
$\Sigma(2^{n-1}m)=O(m^{-1})$, which proves the result.
\end{proof}

\section{Correlations for general weights}\label{sec:general}

We now consider the Thue--Morse {system} for general real weights,
$f(-1)$ and $f(1)$. Two quantities will be of paramount importance
here. First, we have
\[
   \EE(f) \, = \lim_{N\to\infty}\myfrac{1}{N}
   \sum_{m=0}^{N\nts -1} f(w^{}_m) \, =
   \int_{\XX}f(x_0)\dd\mu(x) \, = \,
   \frac{f(1)+f(-1)}{2} \ts ,
\]   
since the frequencies of $1$ and $-1$ in $w$ are both equal to
$1/2$. Second, set
\[
    h^{}_f \, \defeq \, \frac{f(1)-f(-1)}{2} \ts ,
\]
so that $f( \pm \ts 1)=\EE(f) \pm \ts h^{}_f$.

\begin{prop}\label{prop:f2and3}
  For any\/ $f \colon \{-1,1\}\xrightarrow{\quad} \RR$, we have\/
  $\eta^{}_{f} (m) = h_f^2\,\eta(m)+\EE(f)^2$ and
\[
  \eta^{}_{f} (m^{}_1 ,m^{}_2 ) \, = \, h_f^2\,\EE(f)
  \big(\eta(m^{}_1 ) + \eta(m^{}_2 ) + 
  \eta(| m^{}_1 - m^{}_2 | )\big) + \EE(f)^3.
\]
\end{prop}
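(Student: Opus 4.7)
The plan rests on the elementary identity
\[
   f(x) \, = \, \EE(f) + h^{}_{f} \ts x \qquad \text{for } x \in \{-1,+1\},
\]
which is immediate from the definition of $\EE(f)$ and $h^{}_f$. First, I would substitute this into the integrand of \eqref{eq:eta-f} and expand multilinearly. Then the computation reduces to evaluating balanced correlations of the form $\int_{\XX} x^{}_{k^{}_1} \cdots x^{}_{k^{}_{j}} \dd\mu(x)$, which, by shift invariance of $\mu$, equal balanced $j$-point correlations $\eta(\ldots)$ of the Thue--Morse system (after translating so that one index is $0$).

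For the $2$-point case, expanding $(\EE(f) + h^{}_f x^{}_0)(\EE(f) + h^{}_f x^{}_{m})$ produces four monomials. The constant term contributes $\EE(f)^2$, the cross terms $\EE(f) \ts h^{}_f (x^{}_0 + x^{}_m)$ integrate to $0$ because the letters $\pm 1$ have equal frequency $1/2$ (this is the balanced $1$-point correlation), and the quadratic term $h_f^2 \ts x^{}_0 x^{}_m$ integrates to $h_f^2 \ts \eta(m)$. This yields the first formula.

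For the $3$-point case, the expansion of
\[
   (\EE(f) + h^{}_f x^{}_0)(\EE(f) + h^{}_f x^{}_{m^{}_1})
   (\EE(f) + h^{}_f x^{}_{m^{}_2})
\]
produces eight monomials of total degree $0,1,2,3$ in the $h^{}_f$-weighted factors. The degree-$0$ term gives $\EE(f)^3$. The three degree-$1$ terms all integrate to $0$ by the same balance argument. The three degree-$2$ terms integrate, via shift invariance, to $\eta(m^{}_1)$, $\eta(m^{}_2)$, and $\eta(\lvert m^{}_1 - m^{}_2 \rvert)$ respectively, producing the stated linear combination with prefactor $h_f^2 \ts \EE(f)$. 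The crucial step is the degree-$3$ term $h_f^3 \ts x^{}_0 x^{}_{m^{}_1} x^{}_{m^{}_2}$: by Corollary~\ref{coro:odd}, all odd-order balanced correlations of the Thue--Morse system vanish, so this term contributes $0$.

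There is essentially no obstacle here beyond bookkeeping; the only non-trivial input is Corollary~\ref{coro:odd} for the vanishing of the cubic term, together with the shift invariance of $\mu$ to identify the various pair integrals. The argument foreshadows the general pattern, which will be used in the next section: under the linearisation $f(x) = \EE(f) + h^{}_f \ts x$, an $n$-point $f$-correlation expands into a sum over subsets $S \subseteq \{0, 1, \ldots, n{-}1\}$ of $h_f^{\lvert S \rvert} \EE(f)^{n-\lvert S \rvert}$ times a balanced $\lvert S \rvert$-point correlation, with the odd-$\lvert S \rvert$ terms automatically dropping out.
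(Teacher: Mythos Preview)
Your proof is correct and essentially matches the paper's approach: both decompose $f$ into its mean and centred part and reduce to balanced correlations, invoking Corollary~\ref{coro:odd} to kill the cubic term. The only cosmetic difference is that the paper writes the $2$-point case via the centring identity $f(w^{}_n)f(w^{}_{n+m}) = (f(w^{}_n)-\EE(f))(f(w^{}_{n+m})-\EE(f)) + \EE(f)(f(w^{}_n)+f(w^{}_{n+m})) - \EE(f)^2$, whereas you substitute $f(x)=\EE(f)+h^{}_f x$ directly and expand; your formulation is in fact the one the paper itself adopts for the general $n$-point case immediately afterwards.
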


\begin{proof}
  We use the identity
\[
  f(w^{}_n) \ts f(w^{}_{n+m}) \, = \, \big( f (w^{}_n)
   -\EE(f)\big) \big( f(w^{}_{n+m})-\EE(f)\big) + \EE(f)
   \big( f(w^{}_n) + f (w^{}_{n+m})\big)-\EE(f)^2 
\]
to give (with $S^m \! f \defeq f \circ S^m$)
\[
\begin{split}
  \eta_f(m) \, & = \int_{\XX}\big(f-\EE(f)\big)
  \big(S^m \! f-\EE(f)\big)\dd\mu+\EE(f)
  \int_{\XX}\big(f+S^m \! f \big)\dd\mu-\EE(f)^2 \\[2mm]
  & = \, h_f^2\,\eta(m)+\EE(f)\big(\EE(f)+
  \EE(f)\big)-\EE(f)^2=h_f^2\,\eta(m)+\EE(f)^2,
\end{split}
\]
which is the first desired result. The second result follows
similarly, recalling that $\eta(m^{}_1 ,m^{}_2)$ vanishes by
Corollary~\ref{coro:odd}.
\end{proof}

The point of Proposition~\ref{prop:f2and3} is to show that, in order
to calculate $\eta^{}_f$, one only needs to know the values of $\eta$
and so, by Corollary~\ref{coro:eta0} and Remark~\ref{rem:only-two},
one only needs to know the value $\eta(0)$. Indeed, in general, we
have
\begin{multline*}
  f(w^{}_k)\prod_{j=1}^{n-1} f(w^{}_{k+m_j}) \, = \,
  \big(f(w^{}_k) - \EE(f)\big)
  \prod_{j=1}^{n-1}\big( f(w^{}_{k+m_j}) - \EE(f)\big)\\
  -\sum_{j=0}^{n-1}\big(-\EE(f)\big)^{n-j}
  p^{}_j \bigl( f(w^{}_k) , f(w^{}_{k+m^{}_1}), \ldots ,
  f(w^{}_{k+m^{}_{n-1}}) \bigr),
\end{multline*}
where $p^{}_j (z^{}_1 , \ldots , z^{}_m)$ is the elementary symmetric
polynomial of degree $j$ in $m$ variables. Thus
\[
  \eta^{}_f (m^{}_1 , \ldots , m^{}_{n-1}) \, = \,
  h_f^n\,\eta (m^{}_1 , \ldots , m^{}_{n-1}) \,
  -\sum_{j=0}^{n-1} \big(-\EE(f)\big)^{n-j}
  \int_{\XX} p^{}_j ( f, S^{m^{}_1}\! f, \ldots , 
  S^{m^{}_{n-1}} \! f)\dd\mu.
\]
Noting that, for any $n \geqslant 2$, we have
\[
  \eta^{}_f (m^{}_1 , \ldots , m^{}_{n-1}) \, = \int_{\XX} f\cdot
  (S^{m^{}_1} \! f) \cdots (S^{m^{}_{n-1}} \! f) \dd\mu \ts
\]
gives the following result. 

\begin{theorem}
  For any\/ $n\geqslant 2$, the\/ $n$-point correlations of the\/
  $f$-weighted Thue--Morse system can be calculated from the
  balanced correlations. Consequently, they are ultimately derived
  from the single value\/ $\eta (0, \ldots, 0)$, which is\/ $0$ for\/
  $n$ odd and\/ $1$ for\/ $n$ even. \qed
\end{theorem}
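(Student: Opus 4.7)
The plan is to reduce the $f$-weighted correlation directly to a finite sum of balanced correlations by using the elementary observation that on the symbol set $\{\pm 1\}$ one has
\[
  f(w) \, = \, \EE(f) + h^{}_f \ts w \ts ,
\]
which is an affine relation valid pointwise. This turns a product of $f$-values into a product over a two-term affine factor, and the expansion is transparent.

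First, with the convention $m^{}_0 = 0$, I would expand
\[
  \prod_{i=0}^{n-1} f ( w^{}_{k+m^{}_i} ) \, = \,
  \prod_{i=0}^{n-1} \bigl( \EE(f) + h^{}_f \ts w^{}_{k+m^{}_i} \bigr)
  \, = \sum_{S \subseteq \{0,1,\ldots,n-1\}} \EE(f)^{n-|S|}
  \ts h_f^{|S|} \prod_{i \in S} w^{}_{k+m^{}_i} \ts .
\]
Taking the Birkhoff average of both sides (which, being a finite sum, commutes with the averaging) and using shift invariance of $\mu$ to translate the smallest index in each $S$ to $0$, the term corresponding to $S$ becomes $\EE(f)^{n-|S|} h_f^{|S|}$ times a balanced $|S|$-point correlation of the Thue--Morse system.

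Next, I would invoke Corollary~\ref{coro:odd} to discard the odd-size subsets $S$, and then Corollary~\ref{coro:eta0} together with Remark~\ref{rem:only-two} to express every surviving balanced correlation in terms of the single number $\eta (0, \ldots, 0)$, which equals $1$ for even order and $0$ for odd order. This yields $\eta^{}_f ( m^{}_1 , \ldots , m^{}_{n-1})$ as a completely explicit polynomial in $\EE(f)$ and $h^{}_f$ with coefficients determined by finitely many balanced correlations, all of which trace back to $\eta(0, \ldots, 0)$. In particular, this recovers the two formulae of Proposition~\ref{prop:f2and3} as the $n=2$ and $n=3$ specialisations.

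The only genuine technical point is verifying that the $|S|$-point piece associated to $S = \{i^{}_1 < \cdots < i^{}_{|S|}\}$ really equals the balanced correlation $\eta (m^{}_{i_2} - m^{}_{i_1} , \ldots , m^{}_{i_{|S|}} - m^{}_{i_1})$, but this is immediate from shift invariance of $\mu$ and the definition of $\eta$ as a Birkhoff average. I expect no substantive obstacle beyond this bookkeeping; the result is essentially built into the affine decomposition of $f$ and the reduction already established for the balanced correlations.
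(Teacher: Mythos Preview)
Your proposal is correct and follows essentially the same route as the paper. The paper also uses the affine decomposition $f(x_i)=\EE(f)+h^{}_f\,x_i$ and expands the product, phrasing the expansion via elementary symmetric polynomials rather than subsets $S$; these are the same computation, and the paper likewise invokes Corollary~\ref{coro:odd} to kill the odd-degree terms and Corollary~\ref{coro:eta0} with Remark~\ref{rem:only-two} to reduce everything to $\eta(0,\ldots,0)$.
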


This result can be made explicit as follows. {First,  observe}
that we have
\begin{equation}\label{eq:weg}
    \int_{\XX} x^{}_{\ell^{}_{0}} \ts x^{}_{\ell^{}_{1}} \! \cdots
    x^{}_{\ell^{}_{n-1}} \dd \mu (x) \, = \, \eta (
    \ell^{}_{1} - \ell^{}_{0}, \ell^{}_{2} - \ell^{}_{0}, \ldots , 
    \ell^{}_{n-1} - \ell^{}_{0} ) \, = \, 0
\end{equation}
for all \emph{even} $n$, as a result of Corollary~\ref{coro:odd},
because this refers to an odd-order correlation. Now, with
$f(x^{}_i) = \EE (f) + x^{}_i \ts h^{}_{f}$ and $m^{}_{0} \defeq 0$,
observe
\begin{align*}
   \eta^{}_{f} (m^{}_{1}, \ldots , m^{}_{n-1} ) \, & = 
   \int_{\XX} f(x^{}_{0}) \, f(x^{}_{m^{}_{1}}) \cdots
   f(x^{}_{m^{}_{n-1}}) \dd \mu (x)  
   \, = \int_{\XX} \, \prod_{i=0}^{n-1}  \bigl( \EE (f) +
   x^{}_{m_i} \ts h^{}_{f} \bigr) \dd \mu (x) \\[1mm]
   & = \, \EE (f)^{n} \, + \sum_{i=1}^{n}
   \EE (f)^{n-i} \, h^{i}_{f}  \int_{\XX}
   p^{}_{i} ( x^{}_{m^{}_{0}} , x^{}_{m^{}_{1}} , \ldots ,
   x^{}_{m^{}_{n-1}} ) \dd \mu (x) \ts ,
\end{align*}
with the elementary symmetric polynomials as above.  Here, the
integral over $p^{}_{i}$ vanishes whenever $i$ is odd, as a result of
\eqref{eq:weg}. A simple calculation now gives that the general
correlation coefficient $\eta^{}_{f} (m^{}_{1}, \ldots , m^{}_{n-1} )$
is equal to
\[
\EE (f)^{n} + \sum_{r=1}^{[\frac{n}{2}]}
    h^{2r}_{f}\, \EE (f)^{n-2r} \! \sum_{0 \leqslant 
   i^{}_{0} < i^{}_{1} < \cdots \ts < i^{}_{2r-1} \leqslant n-1}
   \! \eta( m^{}_{i^{}_{1}} \! - m^{}_{i^{}_{0}} , 
    m^{}_{i^{}_{2}} \! - m^{}_{i^{}_{0}} , \ldots ,
    m^{}_{i^{}_{2r-1}} \! - m^{}_{i^{}_{0}} ) \ts ,
\]
which expresses the general coefficients in terms of the balanced
ones.

In view of our above analysis, two comments are in order. On the one
hand, the various generalisations to Thue--Morse-like sequences
\cite{BGG-TM,BGbook} can and should be analysed, expecting analogous
results. On the other hand, it will be interesting to also look at
higher-order correlations in systems with absolutely continuous
spectrum, such as the Rudin--Shapiro (or Golay--Rudin--Shapiro)
sequence and its various generalisations \cite{CG2017,CGS2018,FMpre},
and to identify any crucial difference from the singular continuous
cases. See the work of Maz\'{a}\v{c}  \cite{Jan} for a first study in this direction.

\bigskip

\section*{Acknowledgements}

MB would like to thank Darren C.~Ong for inspiring discussions that
sparked our interest.  MC would like to express his gratitude to
Bielefeld University, where he visited for the first half of 2022 when
this research was done.  We thank Jan Maz\'{a}\v{c} and Aernout van
Enter for suggestions that helped to improve the manuscript.  This
work was supported by the German Research Foundation (DFG), within the
CRC 1283/2 \mbox{(2021-317210226)} at Bielefeld University.

\bigskip


\begin{thebibliography}{99}

\bibitem{AS1992}
  J.-P.~Allouche and J.~Shallit,
  {The ring of {$k$}-regular sequences},
  \textit{Theor.\ Comput.\ Sci.} \textbf{98}(2) (1992) 163--197.

\bibitem{ASbook}
  J.-P.~Allouche and J.~Shallit,
  \textit{Automatic Sequences. Theory, Applications, Generalizations},
  Cambridge University Press, Cambridge (2003).

\bibitem{A2022}
  K.~Aloui, 
  On the correlation of the Thue--Morse sequence, 
  \textit{Ramanujan J.} \textbf{58} (2022) 771--799.

\bibitem{BG}
  M.~Baake and F.~G\"{a}hler,
  Pair correlations of aperiodic inflation rules via
  renormalisation: Some interesting examples,
  \textit{Topol.\ Appl.} \textbf{205} (2016) 4--27;
  \texttt{arXiv:1906.10484}.
  
\bibitem{BGG-TM}
  M.~Baake, F.~G\"{a}hler and U.~Grimm,
  {Spectral and topological properties of a family of 
  generalised Thue--Morse sequences},
  \textit{J.\ Math.\ Phys.} \textbf{53} (2012) 032701:1--24;
  \texttt{arXiv:1201.1423}.

\bibitem{BGM2019}
  M.~Baake, F.~G\"{a}hler and N.~Ma\~{n}ibo,
  {Renormalisation of pair correlation measures for primitive
  inflation rules and absence of absolutely continuous diffraction},
  \textit{Commun.\ Math.\ Phys.} \textbf{370} (2019) 591--635;
  \texttt{arXiv:1805.09650}.

\bibitem{BGKS}
  M.~Baake, P.~Gohlke, M.~Kesseb\"{o}hmer and T.~Schindler,
  {Scaling properties of the {T}hue--{M}orse measure},
  \textit{Discr.\ Cont.\ Dynam.\ Syst.\ A}
  \textbf{39}(7) (2019) 4157--4185;
  \texttt{arXiv:1810.06949}.

\bibitem{BGbook}
  M.~Baake and U.~Grimm,
  \textit{Aperiodic {O}rder. {V}ol. 1: {A} {M}athematical {I}nvitation},
  Cambridge University Press, Cambridge (2013).
  
\bibitem{BG2019}
   M.~Baake and U.~Grimm,
   {Scaling of diffraction intensities near the origin: some
     rigorous results}, \textit{J.\ Stat.\ Mech.{\!}~Theory Exp.}
   \textbf{2019}(5) (2019) 054003:1--25;
   \texttt{arXiv:1905.04177}.

\bibitem{BGM2018}
  M.~Baake, U.~Grimm and N.~Ma\~{n}ibo,
  {Spectral analysis of a family of binary inflation rules},
  \textit{Lett.\ Math.\ Phys.} \textbf{108}(8) (2018) 1783--1805;
  \texttt{arXiv:1709.09083}.

\bibitem{BS2014}
  A.I.~Bufetov and B.~Solomyak,
  {On the modulus of continuity for spectral measures in
    substitution dynamics}, \textit{Adv.\ Math.}
  \textbf{260} (2014) 84--129;
  \texttt{arXiv:1305.7373}.
  
\bibitem{BS2021}
  A.I.~Bufetov and B.~Solomyak,
  {Self-similarity and spectral theory:\ on the spectrum of substitutions},
  \textit{Algebra i Analiz} \textbf{34}(3) (2022) 5--50;
  \texttt{arXiv:2111.01109}.

\bibitem{CG2017}
  L.~Chan and U.~Grimm,
  {Spectrum of a {R}udin--{S}hapiro-like sequence},
  \textit{Adv.\ Appl.\ Math.} \textbf{87}(6) (2017) 16--23;
  \texttt{arXiv:1611.04446}.

\bibitem{CGS2018}
  L.~Chan, U.~Grimm and I.~Short,
  {Substitution-based structures with absolutely continuous spectrum},
  \textit{Indag.\ Math.} \textbf{29}(4) (2018) 1072--1086;
  \texttt{arXiv:1706.05289}.

\bibitem{FMpre}
  N.P.~Frank and N.~Ma\~nibo,
  {Spectral theory of spin substitutions}, 
  \textit{Discr.\ Cont.\ Dynam.\ Syst.\ A}
  \textbf{42} (2022) 5399--5435;
  \texttt{arXiv:2108.08642}
  
\bibitem{G1968}
   A.O.~Gel'fond, 
   Sur les nombres qui ont des propri\'et\'es additives et
   multiplicatives donn\'ees,
   \textit{Acta Arith.} \textbf{13} (1968) 259--265.

\bibitem{Kaku}
   S.~Kakutani,  
   {Strictly ergodic symbolic dynamical systems},
   in: \textit{Proceedings of the Sixth Berkeley Symposium on
   Mathematical Statistics and Probability}, 
   L.M.~Le Cam, J.~Neyman and E.L.~Scott
   (eds.), University of California Press, Berkeley (1972),
   pp.~319--326.

\bibitem{Kon}
   J.~Konieczny,
   Gowers norms for the Thue--Morse and Rudin--Shapiro sequences,
   \textit{Ann.\ Inst.\ Fourier} \textbf{69} (2019) 1897--1913;
   \texttt{arXiv:1611.09985}.

\bibitem{Luck}
   J.M.~Luck,
   Cantor spectra and scaling of gap widths in deterministic
   aperiodic systems,
   \textit{Phys.\ Rev.\ B} \textbf{39} (1989) 5834--5849.
   
\bibitem{M1927}
   K.~Mahler,
   {The spectrum of an array and its application to the study
   of the translation properties of a simple class of arithmetical functions.
   II: On the translation properties of a simple class of arithmetical
   functions},
   \textit{J.\ Math.\ Phys.\ (MIT)} \textbf{6} (1927) 158--163.
   
\bibitem{Jan}
   J.~Maz\'{a}\v{c},
    Correlation functions of the Rudin--Shapiro sequence, 
    \textit{preprint}, \texttt{arXiv:2211.01090}. 
  
\bibitem{N1969}
   D.J.~Newman, 
   On the number of binary digits in a multiple of three, 
   \textit{Proc.\ Amer.\ Math.\ Soc.} \textbf{21} (1969) 719--721.

\bibitem{NS1975}  
   D.J.~Newman and M.~Slater, 
   Binary digit distribution over naturally defined sequences, 
   \textit{Trans.\ Amer.\ Math.\ Soc.} \textbf{213} (1975) 71--78.

\bibitem{Opre}
   D.C.~Ong,
   {Abstract art generated by Thue--Morse correlation functions}, 
   \textit{preprint}, \texttt{arXiv:2209.11162}.

\bibitem{Qbook}
  M.~Queff\'{e}lec,
  \textit{Substitution {D}ynamical {S}ystems---{S}pectral
  {A}nalysis}, 2nd ed., LNM 1294, Springer, Berlin (2010).
  
\bibitem{Aernout}
  A.C.D.~van Enter and J.~Mi\c{e}kisz,
  How should one define a weak crystal{\ts}?,
  \textit{J.\ Stat.\ Phys.} \textbf{66} (1992) 1147--1153. 

\end{thebibliography}
\end{document}